\renewcommand{\bar}{\widebar}
\newcommand{\e}{\varepsilon}
\newcommand{\NN}{\mathbb{N}}
\newcommand{\ZZ}{\mathbb{Z}}
\renewcommand{\d}{{\rm d}}
\newcommand{\Aut}{\operatorname{Aut}}
\newtheorem{thm}{Theorem}[section]
\newtheorem{cor}[thm]{Corollary}
\newtheorem{lem}[thm]{Lemma}
\newtheorem{prop}[thm]{Proposition}
\newtheorem{prob}[thm]{Problem}
\theoremstyle{definition}
\newtheorem{defn}[thm]{Definition}
\theoremstyle{remark}
\newtheorem{rem}[thm]{Remark}
\newtheorem{ex}[thm]{Example}
\renewcommand{\L}{\mathcal{L}}
\newcommand{\la}{\langle}
\newcommand{\ra}{\rangle}
\newcommand{\Lo}{\L_{\omega_1, \omega}}
\newcommand{\Mod}{{\rm Mod}}
\newcommand\blfootnote[1]{%
  \begingroup
  \renewcommand\thefootnote{}\footnote{#1}%
  \addtocounter{footnote}{-1}%
  \endgroup
}
\begin{document}

\title{Generic length functions on countable groups}
\date{}
\author{A. Jarnevic, D. Osin\thanks{The second author has been supported by the NSF grant DMS-1853989.}, K. Oyakawa}

\maketitle

\vspace{-3mm}

\begin{abstract}\blfootnote{\textbf{MSC} Primary: 20F65. Secondary: 03C60, 03C75, 05C63, 20F67}
Let $L(G)$ denote the space of integer-valued length functions on a countable group $G$ endowed with the topology of pointwise convergence. Assuming that $G$ does not satisfy any non-trivial mixed identity, we prove that a generic (in the Baire category sense) length function on $G$ is a word length and the associated Cayley graph is isomorphic to a certain universal graph $U$ independent of $G$. On the other hand, we show that every comeager subset of $L(G)$ contains $2^{\aleph_0}$ ``asymptotically incomparable" length functions.  A combination of these results yields $2^{\aleph_0}$ pairwise non-equivalent regular representations $G\to \Aut(U)$. We also prove that generic length functions are virtually indistinguishable from the model-theoretic point of view. Topological transitivity of the action of $G$ on $L(G)$ by conjugation plays a crucial role in the proof of the latter result.
\end{abstract}

\section{Introduction}

Metrics on groups are one of the main objects of study in geometric group theory. These metrics are usually assumed to be invariant under the group action on itself by left multiplication and, therefore, can be encoded in terms of length functions.

\begin{defn}\label{Def:LF}
An (integer-valued) \emph{length function} on a group $G$ is a map $\ell\colon G\to \NN\cup \{ 0\}$ satisfying the following conditions for all $g,h\in G$:
\begin{enumerate}
\item[(L$_1$)] $\ell(g)=0$ if and only if $g=1$;
\item[(L$_2$)] $\ell(g)=\ell(g^{-1})$;
\item[(L$_3$)] $\ell (gh)\le \ell(g)+\ell(h)$.
\end{enumerate}
\end{defn}

Every length function $\ell$ on $G$ gives rise to a left-invariant metric defined by the formula $\d_\ell(a,b)=\ell(a^{-1}b)$ for all $a,b\in G$. It is easy to see that the map $\ell\mapsto \d_\ell$ is a one-to-one correspondence between lengths functions and integer valued left-invariant metrics on $G$.

Group actions on metric spaces and group embeddings serve as a rich source of length functions. The following particular class of examples will play an important role in our paper.

\begin{defn}
A length function $\ell$ on a group $G$ is a \emph{word length} if there exists a generating set $X$ of $G$ such that, for every $g\in G$, $\ell(g)$ equals the length of a shortest word in the alphabet $X\cup X^{-1}$ representing $g$.
\end{defn}

We denote by $L(G)$ the space of length functions on a group $G$ endowed with the topology of pointwise convergence. Recall that a topological space is said to be \emph{perfect} if it has no isolated points. The starting point of our work is the following elementary observation.

\begin{prop}\label{Prop:P}
For every countably infinite group $G$, $L(G)$ is a perfect Polish space.
\end{prop}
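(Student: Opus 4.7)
The plan is to verify separately that $L(G)$ is Polish and that it has no isolated points. For the Polish part, I view $L(G)$ as a subset of the Polish space $(\NN\cup\{0\})^G$ (a countable product of countable discrete spaces). Each of (L$_1$), (L$_2$), (L$_3$) involves only the values of $\ell$ at finitely many group elements and so cuts out a closed subset; the intersection $L(G)$ is therefore closed in $(\NN\cup\{0\})^G$, and hence Polish.

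For the no-isolated-points part, I fix $\ell \in L(G)$ and a basic neighborhood $U = \{\ell' \in L(G) : \ell'|_F = \ell|_F\}$ for some finite $F \subset G$; I may assume $F = F^{-1}$ and $1 \in F$ by enlarging. The goal is to produce some $\ell' \neq \ell$ in $U$, and I split into three cases. If $\ell$ is unbounded, take the truncation $\ell'(g) := \min(\ell(g), N)$ with $N \geq \max_{g \in F} \ell(g)$, which is a length function (subadditivity reduces to $\min(x+y, N) \leq \min(x, N) + \min(y, N)$) and differs from $\ell$ at any $g$ with $\ell(g) > N$, which exists by unboundedness. If $\ell$ is bounded and $H := \langle F \rangle$ is a proper subgroup of $G$, set $\ell'(g) := \ell(g) + \mathbf{1}_{G\setminus H}(g)$; subadditivity of $\mathbf{1}_{G\setminus H}$ follows from $H$ being closed under multiplication, and $\ell' \neq \ell$ on the nonempty set $G \setminus H$.

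The remaining case is bounded $\ell$ with $\langle F \rangle = G$. Here I introduce the weighted word length $\ell_F(g) := \min\{\sum_i \ell(f_i) : g = f_1 \cdots f_k,\ f_i \in F\}$, well-defined because $F$ generates. A direct check shows $\ell_F$ is a length function with $\ell_F \geq \ell$ pointwise and $\ell_F|_F = \ell|_F$; moreover $\ell_F(g) \geq |g|_F$ for the unweighted $F$-word length, which is unbounded on $G$ since $F$ is finite and $G$ is infinite. Hence $\ell_F$ is unbounded while $\ell$ is not, so $\ell' := \ell_F$ lies in $U$ and differs from $\ell$. The main obstacle is indeed this third case: naive pointwise modifications fail because raising $\ell(g_0)$ at a single element typically violates the triangle inequality, as $g_0$ lies on many geodesic factorizations already rigidified by the values fixed on $F$. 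The remedy is to forgo local perturbation in favor of the global construction $\ell_F$, which automatically matches $\ell$ on $F$ and is forced to grow because a finite generating set cannot keep $F$-balls bounded on an infinite group.
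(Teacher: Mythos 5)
Your proof is correct, but the perfectness argument is genuinely different from the paper's. The paper proves a single reusable density statement (Corollary~\ref{Cor:BUdense}): every basic neighborhood $W(\ell,F)$ contains both a bounded and a proper (hence unbounded) length function, obtained uniformly from one weight-function construction (Lemma~\ref{Lem:wM}) by assigning weight $\ell(f)$ on $F$ and large weights $M_i$ off $F$ --- constant $M_i$ gives a bounded function, $M_i\to\infty$ gives a proper one --- and since $\ell$ cannot be both bounded and unbounded, it is not isolated. Your argument instead splits into three cases according to whether $\ell$ is bounded and whether $F$ generates, using truncation, the indicator of $G\setminus\langle F\rangle$, and the weighted word length $\ell_F$ respectively; each step checks out (your $\ell_F$ is essentially the paper's $\ell_\omega$ in the special case where $F$ generates, while the paper's version sidesteps your case distinction by giving every element outside $F$ a finite weight). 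The trade-off: your route is self-contained and elementary, but the paper's buys more --- the density of bounded, of proper, and of unbounded length functions is reused later (e.g., in Lemma~\ref{Lem:R1R2} and Proposition~\ref{Prop:non tt}), whereas your perturbations only certify non-isolation. Two cosmetic points: in Case 1 you should take $N\ge\max\{1,\max_{f\in F}\ell(f)\}$ so that (L$_1$) survives when $F=\{1\}$, and for the Polish part you show $L(G)$ is actually closed in $(\NN\cup\{0\})^G$ (each defining condition is clopen coordinate-wise), which is a slightly stronger and more elementary observation than the paper's $G_\delta$ claim; both suffice.
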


In particular, $L(G)$ satisfies the Baire category theorem. Following the standard terminology, we say that a certain property \emph{holds for a generic length function on $G$} if it holds for every length function from a comeager subset of $L(G)$. The goal of this paper is to study geometric and model-theoretic properties of generic length functions on countably infinite groups. To state our main results, we need some auxiliary definitions.

\begin{defn} A group $G$ satisfies a \emph{non-trivial mixed identity} if there exists a non-trivial element $w$ of the free product $G\ast \ZZ$ such that $\alpha(w)=1$ for every homomorphism $\alpha\colon G\ast \ZZ\to G$ satisfying $\alpha(g)=g$ for all $g\in G$. If $G$ does not satisfy any non-trivial mixed identity, it is said to be \emph{mixed identity free} (or \emph{MIF} for brevity).
\end{defn}

The property of being MIF is much stronger than being identity free. On the other hand, the class of MIF groups is rather wide. For instance, it includes all non-cyclic torsion-free hyperbolic groups (e.g., free groups of rank at least $2$ and fundamental groups of closed surfaces of genus at least $2$) and, more generally, all acylindrically hyperbolic groups without non-trivial finite normal subgroups. We refer the reader to Section \ref{Sec:Prelim} for more details and examples.

Our first result shows that generic length functions on countable MIF groups are indistinguishable from the purely geometric point of view. More precisely, for every $\ell\in  L(G)$, we consider the \emph{(unoriented, unlabeled) Cayley graph} with the vertex set $G$, where two vertices $a,b\in G$ are connected by an edge if and only if $\ell(a^{-1}b)=1$. If $\ell$ is a word length, this definition yields the usual Cayley graph with respect to the generating set $X_\ell=\{ g\in G \mid \ell(g)=1\}$. Further, we denote by $U$ the universal graph for isometric embeddings, whose existence was proved by Moss in \cite{M}. Recall that $U$ can be defined as the unique countable graph satisfying the following properties: $U$ is connected, isometrically embeds every finite connected graph, and any isometry between any two finite sets of vertices (endowed with the induced metric) extends to an automorphism of $U$.

\begin{thm}\label{main1}
For any countable MIF group $G$, a generic length function on $G$ is a word length and the corresponding Cayley graph is isomorphic to the Moss graph $U$.
\end{thm}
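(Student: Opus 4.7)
The plan is to invoke the Fra\"iss\'e-style reformulation of Moss's characterization of $U$: a countable graph is isomorphic to $U$ if and only if it is connected and satisfies the \emph{extension property} --- for every pair of finite connected graphs $K_0\subseteq K$ and every isometric embedding $\phi\colon K_0\to\Gamma$, there is an isometric extension $\hat\phi\colon K\to\Gamma$. I would show that a generic $\ell\in L(G)$ makes the Cayley graph $\Gamma_\ell$ satisfy this property with distances measured by $\d_\ell$. Specialized to the case where $K$ is a shortest path from $1$ to $g\in G$ of length $n=\ell(g)$, the extension produces an $\ell$-geodesic realizing $\ell(g)$, so $\d_\ell$ coincides with the graph metric of $\Gamma_\ell$; thus $\ell$ is a word length, $\Gamma_\ell$ is connected, and Moss's uniqueness theorem \cite{M} identifies $\Gamma_\ell$ with $U$.

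\textbf{Baire setup.} For each triple $(K_0,K,\phi)$, with $K_0\subseteq K$ ranging over pairs of finite connected graphs and $\phi\colon V(K_0)\to G$ an arbitrary map, set
$$\mathcal{D}_{K_0,K,\phi}=\{\ell\in L(G) : \phi\text{ is not isometric from }K_0\text{ into }(G,\d_\ell)\}\cup\{\ell\in L(G) : \phi\text{ extends isometrically to }K\}.$$
There are countably many such triples, and each $\mathcal{D}_{K_0,K,\phi}$ is open since both alternatives are witnessed by finitely many values of $\ell$. Any $\ell$ in the intersection has the extension property, so by Baire category the proof reduces to showing each $\mathcal{D}_{K_0,K,\phi}$ is dense.

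\textbf{Density via MIF; main obstacle.} Fix a basic open $V\subseteq L(G)$ pinning $\ell$ on a finite symmetric set $F\subseteq G$ with $\phi(V(K_0))\cup\{1\}\subseteq F$, and assume $\phi$ is isometric throughout $V$ (else $V\cap\mathcal{D}_{K_0,K,\phi}\ne\emptyset$ is immediate). Enumerate $V(K)\setminus V(K_0)=\{v_1,\dots,v_m\}$. Using MIF, I would select $g_1,\dots,g_m\in G$ so that the products $g_i^{-1}g_j$, $g_i^{-1}\phi(u)$, and $g_i^{-1}f$ (for $i\ne j$, $u\in V(K_0)$, $f\in F$) are pairwise distinct, different from $1$, and disjoint from $F$. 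This step requires MIF: forbidden coincidences correspond to finitely many non-trivial words in $G\ast\langle x_1,\dots,x_m\rangle$, and the multivariable form of MIF (a standard corollary of the single-variable definition for countable $G$) lets one avoid them simultaneously when evaluating at $(g_1,\dots,g_m)$. With the $g_i$ chosen, one defines $\ell\in V$ by prescribing $\ell(g_i^{-1}g_j)=d_K(v_i,v_j)$ and $\ell(g_i^{-1}\phi(u))=d_K(v_i,u)$, keeping the values on $F$, assigning each $\ell(g_i^{-1}f)$ an admissible value from the interval forced by triangle inequalities, and completing to $G$ by a standard word-length construction with a large default cost. The main obstacle is formulating the MIF-based genericity lemma with enough strength and verifying that the mixed triangle-inequality intervals are non-empty, which reduces to a finite case analysis using the isometry of $\phi$ and the triangle inequalities for $d_K$ and $\ell_0$.
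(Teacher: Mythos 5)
Your overall architecture matches the paper's: reduce to Moss's extension property, express it as a countable intersection of open sets $\mathcal{D}_{K_0,K,\phi}$, and use MIF to prove density. Two of your deviations are harmless or even slightly cleaner: you derive word-length-ness directly from the extension property applied to a path (the paper instead uses a separate comeager set $C=\bigcap_g C(g)$ of length functions admitting ``predecessors''), and your multivariable form of MIF does hold for countable groups (it follows by induction since $G\ast\ZZ$ is itself MIF for $G\ne 1$). The paper works only with one-point extensions, i.e.\ with sets $D(\bar a,\bar d)$ asking for a single $g$ with $\ell(g^{-1}a_i)=d_i$, which is all Moss's characterization requires and keeps the single-variable MIF sufficient; your multi-point version adds bookkeeping without adding strength.

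The genuine gap is in the density step, which is where all the work in the paper's Lemma \ref{Lem:D} lives. You invoke MIF only to put $g_1,\dots,g_m$ in ``general position,'' i.e.\ to make the finitely many products $g_i^{-1}g_j$, $g_i^{-1}\phi(u)$, $g_i^{-1}f$ pairwise distinct, nontrivial, and disjoint from $F$. That only guarantees that your prescription of values (equivalently, a weight function) is \emph{well defined}; it does not guarantee that the resulting length function actually \emph{attains} the prescribed values. After you ``complete to $G$ by a standard word-length construction,'' the induced length function is the infimum over all decompositions, and a decomposition of some $f\in F$ mixing old generators from $F$ with the new elements $g_i^{-1}a_j$ could have total weight strictly less than $\ell(f)$ — destroying the requirement $\ell_{\rm new}\in V$ — or could shorten $\ell_{\rm new}(g_i^{-1}\phi(u))$ below $d_K(v_i,u)$ — destroying the extension. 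Ruling out such shortcuts is not a consequence of general position: it requires choosing $g$ so that it fails every equation $w(g)=1$ for $w$ in an explicit finite set of words of the form $f^{-1}z_1\cdots z_m$ with $z_i\in F\cup\{x^{-1}a_j,\,a_j^{-1}x\}$ and $m$ bounded by the relevant lengths (the set $I_2$ in the paper), and then showing that any hypothetical shortcut produces a \emph{nontrivial} such word — which itself needs a careful minimality argument on geodesic decompositions together with both halves of the consistency inequalities (\ref{Eq:cons}). This is the core of the proof and is absent from your proposal; your ``finite case analysis to check the triangle-inequality intervals are non-empty'' addresses only consistency of the target values, not their realizability.
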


It is worth noting that the class of countable groups satisfying the conclusion of Theorem \ref{main1} is strictly larger than the class of countable MIF groups but does not contain all countably infinite groups; for detais, see Remark \ref{Rem:ab} and Example \ref{Ex:ab}.

It is also interesting to compare Theorem \ref{main1} with the result of Cameron and Johnson \cite{CJ} stating that, for any countable group $G$ from a large class including all MIF groups, the Cayley graph of $G$ with respect to a randomly chosen generating set is isomorphic to the Rado graph. Our ``randomness model" is different and the outcome is very different as well: the graph $U$ is unbounded as a metric space while the Rado graph has diameter $2$. In particular, we derive the following.

\begin{cor}\label{Cor:main1}
Every countable MIF group has an unbounded Cayley graph.
\end{cor}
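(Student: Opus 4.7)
The plan is to derive the corollary as an essentially immediate consequence of Theorem~\ref{main1}, combined with the observation that the Moss graph $U$ has infinite diameter.

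First, I would verify that any countable MIF group $G$ must be infinite. Indeed, if $|G|=n<\infty$, then $x^n \in G\ast\ZZ$ (with $x$ freely generating the $\ZZ$-factor) is a non-trivial element which is killed by every homomorphism $G \ast \ZZ \to G$ restricting to the identity on $G$, yielding a non-trivial mixed identity and contradicting the MIF hypothesis. Hence Proposition~\ref{Prop:P} applies and $L(G)$ is a non-empty Polish space, so every comeager subset of $L(G)$ is non-empty by the Baire category theorem. By Theorem~\ref{main1}, I can therefore fix some $\ell \in L(G)$ which is a word length whose associated Cayley graph is isomorphic to $U$. Setting $X_\ell = \{g \in G : \ell(g)=1\}$, the Cayley graph of $G$ with respect to the symmetric generating set $X_\ell$ is, by the discussion preceding Theorem~\ref{main1}, precisely this Cayley graph, hence isomorphic to $U$.

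It then remains only to check that $U$ is unbounded as a metric space. This follows directly from its universality: for each $n \geq 1$, the path graph on $n+1$ vertices is a finite connected graph of diameter $n$, and any isometric embedding of it into $U$ forces $U$ to contain vertices at distance at least $n$. Hence $U$ has infinite diameter, so the Cayley graph of $G$ with respect to $X_\ell$ is unbounded. There is no substantial obstacle; the whole content of the corollary is packaged into Theorem~\ref{main1}, and the job here is just to extract the geometric consequence.
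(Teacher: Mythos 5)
Your proof is correct and follows the paper's own (implicit) derivation: the corollary is stated as an immediate consequence of Theorem~\ref{main1} together with the observation, made just before the corollary, that $U$ is unbounded as a metric space. Your extra verifications (that a countable MIF group is infinite, and that $U$ has infinite diameter because it isometrically embeds arbitrarily long paths) are correct and merely make explicit what the paper leaves to the reader.
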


Surprisingly, there exist infinite groups such that their Cayley graphs with respect to all generating sets are bounded. For example, $Sym(\NN)$ has this property; moreover, every length function on $Sym(\NN)$ is bounded \cite{B,C}. Whether every infinite \emph{countable} group has an unbounded Cayley graph is an open problem.

In geometric group theory, it is customary to study metrics on groups up to the bi-Lipschitz equivalence relation (for motivation, see \cite[Section 0.3.C]{Gro}). In terms of length functions, the definition can be stated as follows. For two length functions $\ell_1, \ell_2\in L(G)$, we write $\ell_1 \preccurlyeq \ell_2$ if the ratio $\ell_1/\ell_2$ is bounded on $G\setminus \{ 1\}$. Further, $\ell_1$ and $\ell_2$ are \emph{(bi-Lipschitz) equivalent} if $\ell_1 \preccurlyeq \ell_2$ and $\ell_2 \preccurlyeq \ell_1$. If $G$ is finitely generated, then word lengths on $G$ with respect to finite generating sets form a single equivalence class. However, every countably infinite group admits many non-equivalent length functions. More precisely, we prove the following.

\begin{thm}\label{main2}
Let $G$ be a countable group. Every comeager subset of $L(G)$ contains $2^{\aleph_0}$ pairwise $\preccurlyeq$-incomparable (in particular, non-equivalent) length functions.
\end{thm}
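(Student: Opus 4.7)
The plan is to combine a Baire category argument with the Kuratowski--Mycielski theorem on generic tuples in perfect Polish spaces. Define
\[
\mathcal{I} = \{(\ell_1, \ell_2) \in L(G)\times L(G) : \ell_1 \not\preccurlyeq \ell_2 \text{ and } \ell_2 \not\preccurlyeq \ell_1\}.
\]
Once it is shown that $\mathcal{I}$ is comeager in $L(G) \times L(G)$, the Kuratowski--Mycielski theorem applied to the perfect Polish space $L(G)$ (Proposition~\ref{Prop:P}), with comeager sets $R_1 = C$ (the given comeager subset) and $R_2 = \mathcal{I}$, produces a Cantor subset $P \subseteq C$ such that every pair of distinct elements of $P$ lies in $\mathcal{I}$. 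Since $|P| = 2^{\aleph_0}$, this yields the desired family of pairwise $\preccurlyeq$-incomparable length functions.

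The main task is therefore to prove that $\mathcal{I}$ is comeager. Write $\mathcal{I} = \bigcap_{n \geq 1}(D_n \cap D_n')$, where
\[
D_n = \{(\ell_1, \ell_2) \in L(G)^2 : \ell_1(g) > n\, \ell_2(g) \text{ for some } g \in G \setminus \{1\}\}
\]
and $D_n'$ is the symmetric variant. Each $D_n$ is open because the evaluations $\ell \mapsto \ell(g)$ are locally constant on $L(G)$ with integer values. By symmetry it suffices to show $D_n$ is dense. Fix a non-empty basic open set $U_1 \times U_2 \subseteq L(G)^2$ determined by finite sets $F_i \subseteq G$ and $\ell_i^0 \in L(G)$ via $U_i = \{\ell : \ell|_{F_i} = \ell_i^0|_{F_i}\}$. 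I shall produce $\ell_i \in U_i$ and $g \in G\setminus\{1\}$ with $\ell_1(g) > n\,\ell_2(g)$ by making $\ell_2$ globally bounded and $\ell_1$ unbounded.

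For $\ell_2$, set $M := \max\{1, \max_{f \in F_2}\ell_2^0(f)\}$ and $\ell_2(g) := \min(\ell_2^0(g), M)$ for $g \neq 1$ (with $\ell_2(1) := 0$). A direct case analysis (splitting according to whether $\ell_2^0$ at $g, h, gh$ is above or below $M$) verifies the three length function axioms; since $\ell_2^0(f) \leq M$ on $F_2$, the cap $\ell_2$ belongs to $U_2$ and is bounded by $M$ globally. For $\ell_1$, I build an unbounded extension of $\ell_1^0|_{F_1}$ as a weighted word length. Inductively pick $h_1, h_2, \ldots \in G$ with $h_i \notin \langle F_1, h_1, \ldots, h_{i-1}\rangle$ so that $X := F_1 \cup \{h_i\}_{i \geq 1}$ generates $G$ (no $h_i$'s are needed if $F_1$ already generates $G$); assign weights $w(f) := \ell_1^0(f)$ for $f \in F_1$ and $w(h_i) := M_1 + i$ with $M_1 := \max \ell_1^0|_{F_1}$; and let $\ell_1$ be the associated weighted word length. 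The bound $w(h_i) \geq M_1$ blocks any decomposition involving $h_j^{\pm 1}$ from shortening $\ell_1(f)$ for $f \in F_1$, and the triangle inequality for $\ell_1^0$ handles decompositions using only $F_1^{\pm 1}$, so $\ell_1|_{F_1} = \ell_1^0|_{F_1}$. Moreover, $\ell_1$ is unbounded: if $F_1$ generates $G$, this follows from $\ell_1(g) \geq |g|_{F_1}$ (the unweighted word length, unbounded on the infinite group $G$); otherwise, $\ell_1(h_i) = M_1 + i \to \infty$. Any $g$ with $\ell_1(g) > n M$ therefore witnesses $(\ell_1, \ell_2) \in D_n$.

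The main technical point is the verification that $\ell_1(h_i) = M_1 + i$, i.e., that no decomposition of $h_i$ in $X^{\pm 1}$ attains a weight sum strictly less than $M_1 + i$. Any decomposition using only generators in $F_1 \cup \{h_1, \ldots, h_{i-1}\}^{\pm 1}$ is forbidden by the independence condition $h_i \notin \langle F_1, h_1, \ldots, h_{i-1}\rangle$, while any decomposition involving some $h_j^{\pm 1}$ with $j \geq i$ has total weight at least $w(h_j) = M_1 + j \geq M_1 + i$. This completes the proof that $D_n$ is dense; a symmetric argument handles $D_n'$, so $\mathcal{I}$ is comeager and the theorem follows via Kuratowski--Mycielski.
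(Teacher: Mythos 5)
Your proof is correct and follows essentially the same route as the paper: write the incomparability relation as a countable intersection of open sets $D_n$ (the paper's $R_1(C)\cap R_2(C)$), prove density by pairing a bounded length function with an unbounded one in the given neighborhoods, and finish with the Kuratowski--Mycielski theorem. The only cosmetic differences are that you obtain the bounded function by truncation and the unbounded one by an explicit generating-set construction (the paper uses its weight-function lemma for both), and you invoke the general Kuratowski--Mycielski theorem with a unary and a binary relation where the paper combines Kuratowski--Ulam with the binary Mycielski statement.
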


Theorem \ref{main1} gives rise to many injective homomorphisms $G\to \Aut(U)$ such that the induced action of $G$ on the set of vertices of $U$ is regular. We call such homomorphisms \emph{regular representations}. One may wonder how different regular representations arising from Theorem \ref{main1} are. As usual, we say that two representations $\alpha_1, \alpha_2\colon G\to \Aut(U)$ are \emph{equivalent} if there exists $t\in \Aut(U)$ such that $$\alpha_1(g)=t^{-1}\alpha_2(g)t$$  for all $g\in G$.

It is not difficult to show that non-equivalent generic length functions yield non-equivalent regular representations $G\to \Aut(U)$ via Theorem \ref{main1}. Combining this with Theorem \ref{main2}, we obtain the following.

\begin{cor}\label{Cor: non-conjugate}
Every countable MIF group $G$ admits $2^{\aleph_0}$ non-equivalent regular representations $G\to \Aut(U)$.
\end{cor}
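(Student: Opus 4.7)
The plan is to construct a regular representation $\alpha_\ell \colon G \to \Aut(U)$ from each generic length function $\ell$ provided by Theorem \ref{main1}, and then show that $\preccurlyeq$-incomparable length functions yield non-equivalent representations. Combined with Theorem \ref{main2}, this will give the required $2^{\aleph_0}$ representations.

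First, let $\mathcal{C} \subseteq L(G)$ denote the comeager set of length functions $\ell$ for which the Cayley graph $\Gamma_\ell$ is isomorphic to $U$, as provided by Theorem \ref{main1}. For each $\ell \in \mathcal{C}$, choose a graph isomorphism $\varphi_\ell \colon \Gamma_\ell \to U$. Since $G$ acts on $\Gamma_\ell$ by left multiplication (preserving the graph structure by left-invariance of $\d_\ell$) and this action is simply transitive on vertices, conjugating by $\varphi_\ell$ yields a regular representation $\alpha_\ell \colon G \to \Aut(U)$. Setting $v_0 = \varphi_\ell(1)$, we recover $\ell(g) = d_U(v_0, \alpha_\ell(g) v_0)$ for every $g \in G$.

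The key step is the following reduction: if $\alpha_{\ell_1}$ and $\alpha_{\ell_2}$ are equivalent, say $\alpha_{\ell_1}(g) = t^{-1} \alpha_{\ell_2}(g) t$ for some $t \in \Aut(U)$, then $\ell_1$ and $\ell_2$ differ by a bounded amount on $G$. Indeed, letting $v_0 = \varphi_{\ell_1}(1)$ and $w_0 = \varphi_{\ell_2}(1)$, the equivalence gives
\[
\ell_1(g) = d_U(v_0, \alpha_{\ell_1}(g) v_0) = d_U(t v_0, \alpha_{\ell_2}(g) t v_0).
\]
By regularity of $\alpha_{\ell_2}$, there is some $h \in G$ with $t v_0 = \alpha_{\ell_2}(h) w_0$, and substituting yields $\ell_1(g) = \ell_2(h^{-1} g h)$ for all $g \in G$. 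The triangle inequality then gives $|\ell_1(g) - \ell_2(g)| \leq 2 \ell_2(h)$, so $\ell_1$ and $\ell_2$ are in particular bi-Lipschitz equivalent and hence $\preccurlyeq$-comparable. Contrapositively, any two $\preccurlyeq$-incomparable length functions in $\mathcal{C}$ yield non-equivalent regular representations.

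To finish, apply Theorem \ref{main2} to the comeager set $\mathcal{C}$ to obtain a family $\{\ell_i\}_{i \in I}$ of cardinality $2^{\aleph_0}$ of pairwise $\preccurlyeq$-incomparable length functions in $\mathcal{C}$. The corresponding family $\{\alpha_{\ell_i}\}_{i \in I}$ consists of pairwise non-equivalent regular representations $G \to \Aut(U)$, as required. The only substantive point is the computation above translating the equivalence relation on $\Aut(U)$-representations into conjugation of length functions by elements of $G$; once this dictionary is in place, the statement follows immediately from the two main theorems.
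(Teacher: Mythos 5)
Your proof is correct and follows the same overall architecture as the paper: combine Theorem \ref{main1} (a comeager set $\mathcal{C}$ of word lengths with Cayley graph $U$) with Theorem \ref{main2} ($2^{\aleph_0}$ pairwise $\preccurlyeq$-incomparable functions inside $\mathcal{C}$), then show that equivalent regular representations force the underlying length functions to be $\preccurlyeq$-comparable. The only genuine difference is in how you prove that last implication, which the paper isolates as Lemma \ref{Lem:conj}. The paper's argument bounds $\ell_1(x)$ for generators $x\in X_2$ via the triangle inequality, getting $\ell_1(x)\le 2\d_\Gamma(tv_1,v_2)+\ell_2(x)\le C\ell_2(x)$, and then upgrades this to all of $G$ using that $\ell_2$ is a word length. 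You instead exploit regularity to write $tv_0=\alpha_{\ell_2}(h)w_0$ and deduce the exact identity $\ell_1(g)=\ell_2(h^{-1}gh)$, i.e., that equivalent representations yield length functions differing by the conjugacy action; the bound $|\ell_1(g)-\ell_2(g)|\le 2\ell_2(h)$ and hence bi-Lipschitz equivalence (since $\ell_2\ge 1$ off the identity) follow at once. Your version is slightly sharper and does not need the word-length property of $\ell_2$ at this stage, whereas the paper's lemma is stated so as to apply to regular representations by automorphisms of an arbitrary graph $\Gamma$; both yield the corollary in exactly the same way.
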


In particular, every countable group $G$ embeds in $\Aut (U)$ as a subgroup. Indeed, it is straightforward to verify that for every non-trivial countable group $G$, the free product $G\ast \ZZ$ is MIF. Therefore, $G\ast \ZZ$ embeds in $\Aut(U)$. However, there exist countably infinite groups, e.g., the group given by the presentation $\langle a,b\mid b^4=1,\; b^{-1} ab=a^{-1}\rangle$, that do not admit any regular representation in $\Aut(U)$ (see Example \ref{Ex:vc}).

Finally, we study the first order theory of generic length functions on a given group $G$. For this purpose, we consider the first order language $\L$ in the signature
\begin{equation}\label{Eq:sig}
\{\cdot, L_0, L_1, \ldots\},
\end{equation}
where $\cdot$ is a binary operation and each $L_i$ is a unary predicate. Every $\ell\in  L(G)$ gives rise to an $\L$-structure $G_\ell$ as follows. The universe of $G_\ell$ is the group $G$, the operation $\cdot$ is interpreted as the group multiplication, and for any $i\in \NN\cup\{0\}$ and $g\in G$, we have $G_\ell \models L_i(g)$ if and only if $\ell(g)=i$. By abuse of terminology, we say that length functions $\ell_1,\ell_2\in  L(G)$ are \emph{elementarily equivalent}, if so are the $\L$-structures $G_{\ell_1}$ and $G_{\ell_2}$.

\begin{thm}\label{main3}
Suppose that $G$ is a countable MIF group. Then $L(G)$ contains a comeager elementary equivalence class.
\end{thm}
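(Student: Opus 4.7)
The plan is to apply the classical topological zero-one law to the conjugation action of $G$ on $L(G)$. Define this action by $(g\cdot \ell)(h)=\ell(g^{-1}hg)$. A short computation shows it is by homeomorphisms: the image of a basic open set $B(F,\sigma)=\{\ell:\ell|_F=\sigma\}$ is $B(gFg^{-1},\sigma\circ c_g^{-1})$, where $c_g(h)=g^{-1}hg$. More importantly, the group automorphism $h\mapsto ghg^{-1}$ is an $\L$-isomorphism $G_\ell\to G_{g\cdot \ell}$, so for every $\L$-sentence $\varphi$ the set
\[
 [\varphi]:=\{\ell\in L(G): G_\ell\models \varphi\}
\]
is invariant under conjugation. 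A straightforward induction on formula complexity shows that $[\varphi]$ is Borel: fully instantiated atomic sentences $L_i(g_1\cdots g_n)$ evaluate a fixed product in $G$ and then test a single value of $\ell$, giving a clopen set; boolean connectives preserve Borel; and since the universe $G$ is countable, quantifiers reduce to countable unions/intersections of Borel sets.

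The crux of the proof is topological transitivity of the conjugation action, which is exactly where the MIF hypothesis enters. Given two nonempty basic open sets $B(F_1,\sigma_1)$ and $B(F_2,\sigma_2)$, I would look for $g\in G$ and $\ell\in L(G)$ such that $\ell|_{F_1}=\sigma_1$ and $\ell|_{gF_2g^{-1}}=\sigma_2\circ c_g^{-1}$. The MIF property should let us choose $g$ so that $gF_2g^{-1}$ is ``in general position'' relative to $F_1$ (say $F_1\cap gF_2g^{-1}\subseteq\{1\}$, and no relation forced by the group law constrains the two prescriptions against each other). One then merges the two finite prescriptions and appeals to the word-length extension machinery underlying Theorem~\ref{main1}, which realises any admissible finite prescription as the restriction of a genuine word length on $G$.

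Once topological transitivity is in hand, the rest is automatic. Since $L(G)$ is a Polish space (Proposition~\ref{Prop:P}) and each $[\varphi]$ is $G$-invariant with the Baire property, the topological zero-one law forces $[\varphi]$ to be either meager or comeager. Let $T=\{\varphi:[\varphi] \text{ is comeager}\}$. By the zero-one alternative exactly one of $\varphi,\neg\varphi$ lies in $T$ for every sentence, so $T$ is a complete $\L$-theory. Since $\L$ is countable, the intersection $\bigcap_{\varphi\in T}[\varphi]$ is a countable intersection of comeagers, hence comeager; by construction it equals $\{\ell:G_\ell\models T\}$, a single elementary equivalence class.

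The main obstacle is the topological transitivity step. Without MIF the statement fails (e.g.\ for abelian $G$ conjugation is trivial and orbits are singletons), so the argument must genuinely exploit the mixed-identity-free condition to produce, from an arbitrary pair of finite prescriptions, a conjugating element $g$ that separates them and a length function simultaneously extending both. I expect this to be a careful blend of the Cayley-graph/word-length extension techniques developed for Theorem~\ref{main1} with a ``disjointification'' lemma based on the MIF hypothesis.
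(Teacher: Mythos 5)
Your proposal is correct and follows essentially the same route as the paper: the conjugacy action on $L(G)$, Borel $G$-invariance of $\Mod(\sigma)$ (the paper's Lemma \ref{Lem:LE} and Proposition \ref{Prop:LE}), topological transitivity for MIF groups, the topological zero-one law, and the complete theory $T$ of sentences with comeager model sets whose intersection of models is the desired comeager class. The transitivity step you leave as a sketch is precisely the content of the paper's Theorem \ref{Thm:tt}, which implements your ``general position'' idea by using Proposition \ref{Prop:HO} to find a conjugator $g$ avoiding a finite list of non-trivial mixed equations (ensuring both $F^{g^{-1}}\cap F=\emptyset$ and that no short products collapse the prescribed lengths) and then merging the two prescriptions via a weight function.
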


Note that every isomorphism between $\L$-structures $G_{\ell_1}$ and $G_{\ell_2}$ is an automorphism of $G$. If $G$ is finitely generated, $\Aut(G)$ is countable and hence so are the isomorphism classes $[\ell_0]=\{ \ell\in  L(G)\mid G_{\ell}\cong G_{\ell_0}\}$  for all $\ell_0\in  L(G)$. In contrast, every countable MIF group admits a length function whose elementary equivalence class is uncountable. Indeed this follows from Theorem~\ref{main3}, Proposition \ref{Prop:P}, and the well-known fact that every comeager subset of a perfect Polish space has the cardinality of the continuum.

In fact, we derive Theorem \ref{main3} from a much stronger zero-one law for $\Lo$-sentences (see Theorem \ref{Thm:0-1}). The crucial ingredient of the proof of the latter result is topological transitivity of a natural action of a MIF group $G$ on $L(G)$, which seems to be of independent interest. More precisely, for every $\ell\in  L(G)$ and every $g\in G$, let $g\circ \ell$ be the length function on $G$ given by the formula
\begin{equation}\label{Eq:CA}
(g\circ \ell) (x)=\ell(g^{-1}xg)\;\;\; \forall\, x\in G.
\end{equation}
This defines an action $G\curvearrowright L(G)$ by homeomorphisms, which we call the \emph{conjugacy action}.

\begin{thm}\label{main4}
For every countable MIF group $G$, the conjugacy action of $G$ on $L(G)$ is topologically transitive.
\end{thm}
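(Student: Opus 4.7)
I verify topological transitivity directly: for any two non-empty open sets $U, V \subseteq L(G)$, I will produce $g \in G$ with $(g \circ U) \cap V \neq \emptyset$. Shrinking to basic neighborhoods, take $U = \{\ell : \ell|_{F_1} = \phi_1\}$ and $V = \{\ell : \ell|_{F_2} = \phi_2\}$ for finite $F_i \subseteq G \setminus \{1\}$ and $\phi_i \colon F_i \to \NN$ that each extend to some length function on $G$. Setting $F'_2 = g^{-1} F_2 g$ and transporting $\phi_2$ to $\phi'_2$ on $F'_2$ via conjugation, the task reduces to finding $g$ such that the combined partial function $\phi := \phi_1 \cup \phi'_2$ on $F := F_1 \cup F'_2$ extends to some $\ell \in L(G)$.

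The main combinatorial tool is an extension criterion: a partial function $\phi \colon F \to \NN$ with $F \subseteq G \setminus \{1\}$ extends to a length function on $G$ if and only if no $f \in F$ admits a factorization $f = z_1 \cdots z_k$ with $z_i \in F^{\pm 1}$ and $\sum_i \phi(z_i) < \phi(f)$ (setting $\phi(z^{-1}) := \phi(z)$); in that case, $\ell$ may be taken to be the weighted word length on a generating set containing $F$, with weights $\phi$ on $F^{\pm 1}$ and a sufficiently large constant on the remaining generators. Factorizations of $f \in F_1$ using only letters from $F_1$ automatically satisfy the criterion because $\phi_1$ extends to a legitimate length function, and symmetrically on the $F'_2$ side, so the only potential obstructions are \emph{mixed} factorizations combining letters from $F_1$ and $F'_2$. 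Each mixed factorization, after substituting a free variable $x$ for every occurrence of $g$ (which enters through $F'_2 = g^{-1} F_2 g$), translates into an equation $W(g) = 1$ in $G$ for a non-trivial element $W(x) \in G \ast \ZZ$: in free-product normal form, the $G$-syllables contributed by $h \in F_2$ are non-trivial, and after reducing obviously cancelling intermediate $F_1$-parts, what remains is an alternating sequence of non-trivial $G$- and $\ZZ$-syllables. As the total weight is bounded by $\max \phi$, only finitely many candidate words $W_1, \ldots, W_N \in G \ast \ZZ$ arise, together with finitely many additional non-triviality conditions of the same form encoding the disjointness $F_1 \cap g^{-1} F_2 g = \emptyset$.

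The decisive and most delicate step is to find a single $g \in G$ satisfying $W_j(g) \neq 1$ for every $j = 1, \ldots, N$. Single-variable MIF provides a witness for each $W_j$ individually, and my plan for a common witness is to invoke the multi-variable version of MIF: a countable MIF group admits no non-trivial mixed identity in any finite number of free-product variables, which follows from the single-variable hypothesis by embedding a free group $F_n$ into $G \ast \ZZ$ (for instance via $x_i \mapsto a^i x a^{-i}$ for a suitably chosen element $a \in G$) and pulling witnesses back along the embedding. The finite family $W_1, \ldots, W_N$ can then be bundled into a non-trivial element of $G \ast F_{N+1}$ whose non-triviality at a suitable substitution forces each individual $W_j(g) \neq 1$, after which multi-variable MIF yields the common $g$. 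This packaging---converting finitely many single-variable non-triviality requirements into a single multi-variable mixed-identity problem---is the main obstacle of the proof. Once $g$ is secured, the extension criterion manufactures the desired $\ell \in L(G)$, completing the verification of topological transitivity.
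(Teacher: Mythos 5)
Your overall architecture coincides with the paper's: reduce to basic neighborhoods, encode every potential obstruction to extending the combined partial length function as a word $W(x)\in G\ast\langle x\rangle$ that is non-trivial in the free product, find a single $g\in G$ avoiding all of the finitely many resulting equations, and then realize the extension as a weighted word length with a large weight off the finite set. The extension criterion and the translation into non-trivial words are stated somewhat loosely (in particular, you should check explicitly that a ``mixed'' factorization in which all occurrences of $x$ cancel forces a pure $F_1$-factorization violating subadditivity of the original length function; this is exactly how the paper rules out triviality of the words in its sets $I_2, I_3$), but these parts are essentially correct.

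The genuine gap is in the step you yourself single out as decisive: producing one $g$ with $W_j(g)\ne 1$ for all $j$. Your derivation of multi-variable MIF ``by embedding $F_n$ into $G\ast\ZZ$ via $x_i\mapsto a^i x a^{-i}$ and pulling witnesses back'' does not work: what you need is an embedding of the $G$-group $G\ast F_n$ into $G\ast\langle x\rangle$ over $G$, and the homomorphism that is identical on $G$ and sends $x_i\mapsto a^ixa^{-i}$ is not injective --- for instance $[a^{-1}x_1a,\,a^{-2}x_2a^{2}]$ is non-trivial in $G\ast F_2$ but maps to $[x,x]=1$. So a non-trivial multi-variable word may become trivial after the substitution, and no witness can be pulled back. (The bundling of $W_1,\dots,W_N$ into one word whose non-vanishing forces all the inequations simultaneously is also only asserted, not exhibited; iterated commutators such as $[[W_1,W_2^{y_1}],W_3^{y_2}]$ would do it, whereas naive products of conjugates would not.) The detour is in any case unnecessary: the statement you actually need --- for finitely many non-trivial $W_j(x)\in G\ast\langle x\rangle$ there is a common $g\in G$ with $W_j(g)\ne 1$ for all $j$ --- is precisely the reformulation of MIF as universal equivalence of $G$ and $G\ast\langle x\rangle$ as $G$-groups (Proposition \ref{Prop:HO}, i.e., \cite[Proposition 5.3]{HO}): the universal sentence $\forall\, g\ \bigvee_j W_j(g)=1$ fails in $G\ast\langle x\rangle$ (take $g=x$), hence fails in $G$. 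With that substitution for your step 3, your plan matches the paper's proof.
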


In general, Theorems \ref{main1}, \ref{main3}, and \ref{main4} fail for non-MIF groups. It is natural to ask whether these results hold for groups from certain particular classes of interest. For example, it would be interesting to classify solvable groups satisfying any of these theorems. For some partial results, see Proposition \ref{Prop:non tt} and Corollary \ref{Cor:Sol}.

The paper is organized as follows. In the next section we review the necessary background and prove some preliminary results, including Proposition \ref{Prop:P}. Theorems \ref{main1} and \ref{main2} are proved in Section \ref{Sec:CG}. Section \ref{Sec:TT} is devoted to the question of whether the action of a given group $G$ on $L(G)$ is topologically transitive; in particular, we prove Theorem \ref{main4} there. The proof of Theorem \ref{main3} is given in Section~\ref{Sec:EE}.

\section{Preliminaries}\label{Sec:Prelim}

\paragraph{2.1. Mixed identities.}
Let $G$ be an arbitrary group. We adopt the notation $w(x)$ for elements of the free product $G\ast\ZZ=G\ast \langle x\rangle$ and think of these elements as words in the alphabet $G\cup\{ x^{\pm 1}\}$. For $w(x)\in G\ast \langle x\rangle$ and an element $g\in G$, we denote by $w(g)$ the element of $G$ obtained by substituting $x=g$ in the word $w(x)$. Equivalently, $w(g)$ is the image of $w(x)$ under the evaluation homomorphism $G\ast \langle x\rangle \to G$ that is identical on $G$ and sends $x$ to $g$.

Let $w(x)\in G\ast \langle x\rangle$. An element $g\in G$ \emph{satisfies the equation} $w(x)=1$ if $w(g)=1$. Further, $G$ satisfies the \emph{mixed identity} $w(x)=1$ if $w(g)=1$ for all $g\in G$; this mixed identity is said to be \emph{non-trivial} if $w(x)\ne 1$ as an element of $G\ast \langle x\rangle$. If $G$ does not satisfy any non-trivial mixed identity, it is said to be \emph{mixed identity free} (or \emph{MIF} for brevity).

\begin{ex}
Suppose that $G$ contains a non-trivial element $a$ whose conjugacy class consists of $n$ elements. Then $G$ satisfies the non-trivial mixed identity $[x^{n!},a]=1$.
\end{ex}

All acylindrically hyperbolic groups without non-trivial finite normal subgroups are MIF by \cite[Corollary 5.10]{HO}. We do not discuss acylindrical hyperbolicity in this paper and refer the interested reader to \cite{Osi18, Osi16} and references therein. Instead of giving the definition, we just mention some particular examples of MIF groups from this class (for details, see \cite{HO} and the papers cited below):
\begin{enumerate}
\item[(a)] all torsion-free non-cyclic hyperbolic groups (e.g., free groups $F_n$ of rank $n\ge 2$ and fundamental groups of closed surfaces of genus at least $2$);
\item[(b)] $Out(F_n)$ for $n\ge 3$ \cite{BF};
\item[(c)] mapping class groups of closed surfaces of genus at least $3$ \cite{Bow};
\item[(d)] groups given by presentations with one relation and at least $3$ generators \cite{Osi15}.
\end{enumerate}
Furthermore, combining \cite[Proposition 5.15]{HO} with \cite[Poposition 1.3]{MOW}, one can obtain finitely generated MIF groups of completely different nature, e.g., torsion or simple. Finally, we mention that there exist MIF amenable groups \cite{Jac}; more precisely, these groups are (locally nilpotent)-by-cyclic.

It is often convenient to use a characterization of MIF groups in terms of their universal theory. Recall that a \emph{$G$-group} is a group $H$ with a given embedding $G\le H$. Let $Th_\forall^G(H)$ denote the set of all universal sentences in the standard group-theoretic language with constants from the subgroup $G$ that hold true in $H$.  Two $G$-groups $H_1$, $H_2$ are \emph{universally equivalent} if $Th_\forall^G(H_1)=Th_\forall^G(H_2)$. The following elementary result can be found in  \cite[Proposition 5.3]{HO}.

\begin{prop}\label{Prop:HO}
A countable group $G$ is MIF if and only if $G$ and $G\ast \langle x\rangle$ are universally equivalent as $G$-groups.
\end{prop}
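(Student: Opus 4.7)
For $(\Leftarrow)$, suppose $G$ and $G\ast\langle x\rangle$ are universally equivalent as $G$-groups. Given any nontrivial $w\in G\ast\langle x\rangle$, the existential $G$-sentence $\exists y\,(w(y)\ne 1)$ holds in $G\ast\langle x\rangle$ with witness $y=x$; by hypothesis it then holds in $G$, producing some $g\in G$ with $w(g)\ne 1$. Hence $G$ is MIF.

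For $(\Rightarrow)$, assume $G$ is MIF. Since $G$ is a substructure of $G\ast\langle x\rangle$ in the group-theoretic language with constants from $G$, universal $G$-sentences pass from $G\ast\langle x\rangle$ to $G$ automatically; only the reverse inclusion of theories requires argument. Equivalently, every existential $G$-sentence satisfiable in $G\ast\langle x\rangle$ must be satisfiable in $G$. After putting the quantifier-free part into disjunctive normal form and distributing the existential quantifier over disjunctions, such a sentence may be assumed to have the form $\exists \bar y\,\bigl(\bigwedge_i u_i(\bar y)=1\wedge\bigwedge_j v_j(\bar y)\ne 1\bigr)$. Let $\bar y_0\in (G\ast\langle x\rangle)^n$ be a witness, and for each $g\in G$ let $\sigma_g\colon G\ast\langle x\rangle\to G$ be the retraction fixing $G$ and sending $x\mapsto g$. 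Because $\sigma_g$ is a $G$-homomorphism, $u_i(\sigma_g(\bar y_0))=\sigma_g(u_i(\bar y_0))=1$ holds automatically. Setting $V_j(x):=v_j(\bar y_0)\in G\ast\langle x\rangle\setminus\{1\}$, the problem reduces to finding a single $g\in G$ with $V_j(g)\ne 1$ for every $j$.

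The core of the argument is therefore the following multi-identity lemma: if $G$ is MIF and $V_1,\ldots,V_k\in G\ast\langle x\rangle$ are nontrivial, then some $g\in G$ simultaneously satisfies $V_j(g)\ne 1$ for every $j$. I plan to prove this by induction on $k$, constructing at each step a single nontrivial element $U_k(x)\in G\ast\langle x\rangle$ whose non-vanishing at $g$ forces $V_j(g)\ne 1$ for all $j\le k$. Set $U_1:=V_1$; given $U_{k-1}$, define $U_k(x):=[U_{k-1}(x),\,c\,V_k(x)\,c^{-1}]$ for a suitably chosen $c\in G\ast\langle x\rangle$. The required implication is automatic, since a commutator vanishes as soon as either of its entries does; single-variable MIF applied to $U_k(x)$ then produces the desired $g$.

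The main obstacle is arranging that $c$ can be chosen so that $U_k(x)\ne 1$ in $G\ast\langle x\rangle$, equivalently, that $U_{k-1}(x)$ does not commute with every conjugate of $V_k(x)$. This amounts to showing that the normal closure $\langle\!\langle U_{k-1}(x)\rangle\!\rangle$ in $G\ast\langle x\rangle$ is not contained in $C(V_k(x))$. Since MIF forces $G$ to be nontrivial, $G\ast\langle x\rangle=G\ast\ZZ$ is a centerless free product in which every centralizer of a nontrivial element is either a conjugate of a vertex group or a maximal cyclic subgroup. A nontrivial normal subgroup contained in a conjugate of a vertex group would intersect the other vertex group's conjugates trivially and hence vanish, while containment in a cyclic subgroup would force the normal subgroup to be abelian, contradicting the standard fact that any nontrivial normal subgroup of such a free product is non-abelian. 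This verifies the sub-lemma, closes the induction, and yields the universal equivalence of $G$ and $G\ast\langle x\rangle$ as $G$-groups.
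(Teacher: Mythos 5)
The paper does not actually prove this proposition: it is quoted from \cite[Proposition 5.3]{HO}, so there is no in-text argument to compare against. Your proof is correct and is essentially the standard argument for this equivalence. The $(\Leftarrow)$ direction, and the reduction of $(\Rightarrow)$ — via disjunctive normal form and the retractions $\sigma_g$ — to the lemma that finitely many nontrivial $V_1,\dots,V_k\in G\ast\langle x\rangle$ admit a common $g\in G$ with all $V_j(g)\ne 1$, are exactly right; and the commutator induction $U_k=[U_{k-1},\,cV_kc^{-1}]$, combined with the classification of centralizers in a free product, does close that lemma.

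One small repair is needed in the sub-lemma that the normal closure of $U_{k-1}$ cannot lie in $C(V_k)$. Your stated reason for why a nontrivial normal subgroup $N$ of $G\ast\langle x\rangle$ cannot be contained in a conjugate of a vertex group — that it "would intersect the other vertex group's conjugates trivially and hence vanish" — does not actually force $N=1$: the normal closure of $G$ meets every conjugate of $\langle x\rangle$ trivially yet is nontrivial. The correct one-line argument is: after conjugating, $N\le A$ for a vertex group $A$; normality gives $N=bNb^{-1}\le bAb^{-1}$ for any nontrivial $b$ in the other factor, and $A\cap bAb^{-1}=1$ in a free product of two nontrivial groups. With that substitution, and noting (as you do) that MIF forces $G\ne 1$, so that $G\ast\ZZ$ is a genuine free product with an infinite factor and hence has no nontrivial abelian normal subgroup, the argument is complete.
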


\paragraph{2.2. Polish spaces and the Baire category theorem.}
A topological space is \emph{Polish} if it is separable and completely metrizable. A basic example is the power set $2^\NN$ endowed with the product topology (or, equivalently, the topology of pointwise convergence of indicator functions). More generally, the class of Polish spaces is closed under taking countable products.

A\emph{ $G_\delta$-subset} (respectively, an \emph{$F_\sigma$-subset}) of a topological space is a subset that can be represented as a countable intersection of open sets (respectively, a countable union of closed sets).  A subset of a topological space is \emph{meager} if it is a union of countably many nowhere dense sets. A \emph{comeager} set is a set whose complement is meager. Equivalently, a subset of a topological space is comeager if it is a countable intersection of sets with dense interior.

The Baire category theorem (see~\cite[Theorem~8.4]{Kec}) states that the intersection of any countable collection of dense open subsets of a Polish space $P$ is dense in $P$. In particular, comeager subsets of a Polish space are non-empty and the class of comeager subsets is closed under countable intersections.

\begin{prop}[{\cite[Theorem 3.11]{Kec}}]\label{Prop:Gd}
Let $P$ be a Polish space. Every $G_\delta$ subset of $P$ is a Polish space with respect to the induced topology.
\end{prop}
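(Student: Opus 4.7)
The plan is to prove the statement in two steps: first handle the case of open subsets, then reduce the general case to this one via a product construction. Throughout, fix a complete metric $d$ on $P$ that generates its topology; note that any subspace of a separable space is separable, so the only issue is producing a complete compatible metric.

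First I would show that every open $U \subseteq P$ is Polish. The natural subspace metric $d|_U$ typically fails to be complete (Cauchy sequences can escape to $P \setminus U$), so I would modify it to blow up near the boundary. Define
\[
d_U(x,y) = d(x,y) + \left| \frac{1}{d(x, P\setminus U)} - \frac{1}{d(y, P\setminus U)} \right|
\]
for $x,y \in U$, with the usual convention when $U = P$. Since $x \mapsto d(x, P\setminus U)$ is continuous and strictly positive on $U$, the metric $d_U$ generates the subspace topology on $U$. The key point is completeness: a $d_U$-Cauchy sequence $(x_n)$ is in particular $d$-Cauchy, so it has a $d$-limit $x \in P$; and the sequence $1/d(x_n, P\setminus U)$ is Cauchy in $\RR$, hence bounded, giving a uniform positive lower bound on $d(x_n, P\setminus U)$ that passes to the limit. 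Thus $x \in U$, and one checks $x_n \to x$ in $d_U$.

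Next I would handle the $G_\delta$ case. Write $A = \bigcap_{n \in \NN} U_n$ with each $U_n$ open in $P$. By the first step, each $U_n$ carries a complete compatible metric $d_n$, so each $U_n$ is Polish. The countable product $\prod_{n} U_n$ is Polish (e.g.\ via the metric $\sum_n 2^{-n} \min(1, d_n)$). Consider the diagonal embedding
\[
\iota \colon A \longrightarrow \prod_{n \in \NN} U_n, \qquad \iota(x) = (x, x, x, \dots).
\]
This is a topological embedding onto its image. Moreover $\iota(A)$ is closed in $\prod_n U_n$: if $(x^{(k)}_1, x^{(k)}_2, \ldots) \to (y_1, y_2, \ldots)$ in the product with $x^{(k)}_i = x^{(k)}$ independent of $i$, then all coordinates of the limit agree as elements of $P$ (because convergence in each $U_n$ implies $d$-convergence), so the limit lies in $\iota(\bigcap_n U_n) = \iota(A)$. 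Since a closed subspace of a Polish space is Polish (the restricted metric remains complete), we conclude that $A \cong \iota(A)$ is Polish.

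The main technical obstacle is the completeness verification in step one, and in particular choosing the right modification of the metric: the additive correction involving $1/d(\cdot, P\setminus U)$ is precisely engineered so that a Cauchy sequence stays in a set of the form $\{x : d(x, P\setminus U) \ge c\}$ for some $c > 0$, which is closed in $P$ and therefore closes up the gap preventing $d|_U$ from being complete. Once this lemma is in hand, the product/diagonal argument for the $G_\delta$ case is essentially formal.
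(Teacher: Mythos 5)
Your argument is correct and complete. The paper offers no proof of its own for this proposition --- it simply cites \cite[Theorem~3.11]{Kec} --- and your two-step argument (re-metrizing an open set by adding the term $\bigl|1/d(x,P\setminus U)-1/d(y,P\setminus U)\bigr|$ so that Cauchy sequences cannot escape to the boundary, then realizing a $G_\delta$ set as a closed diagonal inside a countable product of open subspaces) is precisely the standard proof given there.
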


We will also need a result of Mycielski stating that for every meager relation on a Polish space, there exist $2^{\aleph_0}$ unrelated elements. For our purpose, it is more convenient to formulate this result in terms of comeger sets.

\begin{thm}[Mycielski, { \cite[Theorem 19.1]{Kec}}]\label{Thm:Myc}
Let $P$ be a non-empty perfect Polish space. For any comeager $R\subseteq P\times P$, there exists a subset $C\subseteq P$ of cardinality $2^{\aleph_0}$ such that $\{(x,y)\in C\times C \mid x\ne y\}\subseteq R$.
\end{thm}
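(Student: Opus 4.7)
The plan is to carry out a Cantor scheme construction on $P$ that simultaneously forces the required pairs into $R$. First I would replace $R$ by a dense $G_\delta$ subset of itself: since $R$ is comeager in the Polish space $P\times P$, its complement lies in a countable union of closed nowhere dense sets, hence $R\supseteq \bigcap_{n\in \NN} U_n$ where each $U_n$ is open dense in $P\times P$, and by taking finite intersections I may assume $U_0\supseteq U_1\supseteq\cdots$. Fix a compatible complete metric $d$ on $P$.

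Next I would inductively build non-empty open sets $V_s\subseteq P$ indexed by finite binary strings $s$, such that (a) $\overline{V_{s\cdot 0}}$ and $\overline{V_{s\cdot 1}}$ are disjoint and contained in $V_s$; (b) $\operatorname{diam}(V_s)<2^{-|s|}$; and (c) $\overline{V_s}\times \overline{V_t}\subseteq U_n$ whenever $s,t$ are distinct strings of common length $n$. To pass from level $n$ to level $n+1$, I would first use that $P$ is perfect (so every non-empty open set contains two distinct points) to choose, inside each $V_s$ with $|s|=n$, two disjoint non-empty open sets as preliminary versions of $V_{s\cdot 0}$ and $V_{s\cdot 1}$, with closures in $V_s$ and diameters below $2^{-(n+1)}$. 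Then I would iterate over the finitely many ordered pairs $(s',t')$ of distinct strings in $2^{n+1}$, at each step using density of $U_{n+1}$ in $P\times P$ to find a basic open rectangle inside the current $V_{s'}\times V_{t'}$ whose closure lies in $U_{n+1}$, and replacing $V_{s'},V_{t'}$ by its factors. Shrinking an open set only makes the closed rectangle it forms with any other set smaller, so earlier-processed pairs remain inside $U_{n+1}$ and the level-$n$ nesting is preserved throughout.

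Once the scheme is built, completeness of $d$ and property (b) ensure that for each $\xi\in 2^{\NN}$ the nested sequence $\overline{V_{\xi|_n}}$ has a single common point $x_\xi$. Setting $C=\{x_\xi:\xi\in 2^{\NN}\}$, condition (a) makes $\xi\mapsto x_\xi$ injective, so $|C|=2^{\aleph_0}$. For distinct $\xi,\eta\in 2^{\NN}$, let $n_0$ be the least level at which they disagree; then $\xi|_m,\eta|_m$ are distinct elements of $2^m$ for every $m\ge n_0$, so (c) yields $(x_\xi,x_\eta)\in \overline{V_{\xi|_m}}\times \overline{V_{\eta|_m}}\subseteq U_m$. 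Since the $U_m$ are decreasing, this places $(x_\xi,x_\eta)$ in $\bigcap_m U_m\subseteq R$, as required. The main obstacle is arranging condition (c) at each level without destroying the constraints inherited from the previous level; this is precisely what the sequential, pair-by-pair shrinking in the construction is designed to handle, and it is the step where both perfectness of $P$ (to split every node into two disjoint children) and density of each $U_n$ in the product space (to meet (c)) are essential.
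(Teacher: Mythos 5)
The paper does not prove this statement; it cites it as Theorem 19.1 of Kechris, and your Cantor-scheme construction (shrinking a binary tree of open sets so that all off-diagonal closed rectangles at level $n$ land in the $n$-th dense open set, then taking the limit points) is precisely the standard proof given there. Your argument is correct, including the key point that the pair-by-pair shrinking at each level preserves all previously arranged inclusions.
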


\paragraph{2.3. Length functions and weights on groups.}
In this paper, we often define length functions by using weights on groups.

\begin{defn}\label{WF}
A \emph{weight function} on a group $G$ is a map  $\omega \colon G\to \NN\cup\{ 0\}$  such that $\omega (g)=0$ if and only if $g=1$.
If, in addition, $ \omega(g)=\omega(g^{-1})$ for all $g\in G$, we say that $\omega $ is \emph{symmetric}.

To every weight function $\omega$ on $G$ we associate a length function $\ell_\omega \in  L(G) $ defined by the formula
\begin{equation}\label{ellw}
\ell_\omega(g)=\min\left\{ \left. \sum_{i=1}^n \omega(x_i) \; \right|\; g=x_1^{\e_1}\cdots x_n^{\e_n},\; n\in \NN,\; x_1, \ldots , x_n\in G,\; \e_1, \ldots , \e_n =\pm 1 \right\}
\end{equation}
for every $g\in G$. Here the minimum is taken over all possible decompositions $g=x_1^{\e_1}\cdots x_n^{\e_n}$ of the element $g$. If the minimum in (\ref{ellw}) is attained at a certain decomposition, we call it a \emph{geodesic decomposition} of $g$. Note that if $\omega$ is symmetric, every $g\in G$ has a geodesic decomposition with $\e_1=\ldots =\e_n=1$.
\end{defn}

Recall that $L(G)$ denotes the space of all length functions $G\to \NN\cup\{ 0\}$ with the topology of pointwise convergence. Thus, the base of neighborhoods of a length function $\ell\in  L(G)$ is formed by the sets
\begin{equation}\label{Eq:WlF}
W(\ell, F)= \{ k\in L (G) \mid \forall \, f\in F\; k(f)=\ell(f)\},
\end{equation}
where $F$ ranges in the set of all finite subsets of $G$.

\begin{lem}\label{Lem:wM}
Let $G$ be a countably infinite group, $\ell\in  L(G)$, $F$ a finite subset of $G$ such that $F^{-1}=F$ and $1\in F$. Let $G\setminus F=\{ g_1, g_2, \ldots\}$. Given a sequence of natural numbers $M=(M_1, M_2, \ldots)$, we define a weight function
$\omega_M$ by the rule
$$
\omega_M (g)=\left\{
\begin{array}{cl}
    \ell(g), & {\rm if}\; g\in F \\
  M_i,  & {\rm if}\; g=g_i,\; i\in\NN.
\end{array}
\right.
$$
If $M_i> \max\{ \ell(f)\mid f\in F\}$ for all $i\in \NN$, then $\ell_{\omega_M}\in W(\ell, F)$.
\end{lem}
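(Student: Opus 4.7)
The plan is to prove the two inequalities $\ell_{\omega_M}(f)\le\ell(f)$ and $\ell_{\omega_M}(f)\ge\ell(f)$ for each $f\in F$ separately.

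First, for the upper bound, I would observe that the trivial decomposition $f=f^1$ is a legal decomposition appearing in the minimum defining $\ell_{\omega_M}(f)$, so
$$\ell_{\omega_M}(f)\le \omega_M(f)=\ell(f),$$
using that $f\in F$ and the definition of $\omega_M$ on $F$.

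For the lower bound, I would fix an arbitrary decomposition $f=x_1^{\e_1}\cdots x_n^{\e_n}$ and argue that $\sum_{i=1}^n \omega_M(x_i)\ge \ell(f)$, split into two cases. In the case where all $x_i\in F$, I use symmetry of $F$ to conclude $x_i^{\e_i}\in F$ for every $i$, so by subadditivity (L$_3$) and symmetry (L$_2$) of $\ell$, together with the definition of $\omega_M$ on $F$,
$$\ell(f)\le \sum_{i=1}^n \ell(x_i^{\e_i}) = \sum_{i=1}^n \ell(x_i) = \sum_{i=1}^n \omega_M(x_i).$$
In the remaining case, some $x_i\notin F$, say $x_i=g_j$. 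Then $\omega_M(x_i)=M_j>\max\{\ell(h)\mid h\in F\}\ge \ell(f)$ by hypothesis, since $f\in F$; hence
$$\sum_{i=1}^n \omega_M(x_i)\ge \omega_M(x_i) = M_j > \ell(f).$$
Taking the minimum over all decompositions of $f$ yields $\ell_{\omega_M}(f)\ge \ell(f)$.

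Combining the two inequalities gives $\ell_{\omega_M}(f)=\ell(f)$ for every $f\in F$, which by the definition of the basic neighborhood \eqref{Eq:WlF} means $\ell_{\omega_M}\in W(\ell,F)$. There is no real obstacle here; the only point requiring minor care is ensuring that the $\pm 1$ exponents are handled correctly, which is immediate once we invoke the symmetry $F=F^{-1}$ and property (L$_2$).
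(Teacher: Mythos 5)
Your proof is correct and follows essentially the same route as the paper: the trivial decomposition gives $\ell_{\omega_M}(f)\le\ell(f)$, and the choice $M_i>\max\{\ell(h)\mid h\in F\}$ forces any decomposition beating $\ell(f)$ to use only elements of $F$, where the triangle inequality rules it out. Your direct case split on arbitrary decompositions is just a slight rephrasing of the paper's observation that elements of $G\setminus F$ cannot occur in a geodesic decomposition.
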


\begin{proof}
For every $f\in F$, we obviously have $\ell_{\omega_M}(f) \le  \omega_M(f) =\ell (f)< M_i$ for all $i$. Hence, elements of $G\setminus F$ cannot occur in a geodesic decomposition of $f$. Let $f=x_1^{\e_1}\cdots x_n^{\e_n}$ be a geodesic decomposition, where $x_1, \ldots, x_n \in F$ and $\e_1, \ldots , \e_n=\pm 1$. By the triangle inequality, we have
$$
\ell_{\omega_M} (f)=\sum_{i=1}^n \omega_M(x_i) = \sum_{i=1}^n \ell(x_i) \ge \ell (f).
$$
Thus, $\ell _{\omega_M}(f)=\ell(f)$ for all $f\in F$.
\end{proof}

We say that a function $f\colon G\to \NN\cup\{ 0\}$ is \emph{bounded} if $\sup\{ f(g) \mid g\in G\} <\infty$. Further, we say that $f$ is \emph{proper} if $|\{g \in G \mid f(g)<n\}|<\infty $ for every $n\in \NN$.

\begin{lem}\label{Lem:BU}
Let $G$ be a group and let $\omega\colon G\to \NN\cup\{ 0\}$ be a weight function. If $\omega$ is bounded (respectively, proper) then the length function $\ell_\omega$ is bounded (respectively, proper). In particular, if $G$ is infinite and $\omega$ is proper, then $\ell_\omega$ is unbounded.
\end{lem}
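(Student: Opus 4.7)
The bounded case is essentially trivial from the definition of $\ell_\omega$: taking the one-term decomposition $g=g$ in (\ref{ellw}) gives $\ell_\omega(g)\le \omega(g)$ for every $g\in G$, so any upper bound on $\omega$ is automatically an upper bound on $\ell_\omega$. I will record this as the first line of the proof.

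For the proper case, the goal is to show that for every fixed $n\in\NN$, the sublevel set $S_n=\{g\in G\mid \ell_\omega(g)<n\}$ is finite. Given $g\in S_n$, I will pick a geodesic decomposition $g=x_1^{\e_1}\cdots x_k^{\e_k}$ from (\ref{ellw}) and first argue that we may assume $x_i\ne 1$ for every $i$ (since $\omega(1)=0$ by the definition of a weight function, any factor equal to $1$ can be deleted without changing the sum of weights). After this clean-up, each $\omega(x_i)$ is a positive integer. Two observations then finish the argument: first, each factor satisfies $\omega(x_i)\le \sum_j \omega(x_j)<n$, so $x_i$ lies in the finite set $T_n=\{h\in G\mid \omega(h)<n\}$; second, $k\le \sum_j \omega(x_j)<n$. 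Thus every element of $S_n$ can be written as a product of at most $n-1$ symbols from the finite set $T_n\cup T_n^{-1}$, and the number of such products is finite. I would then conclude that $\ell_\omega$ is proper.

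The ``in particular'' statement is immediate: if $G$ is infinite and $\ell_\omega$ is proper, then its sublevel sets are finite while $G$ is not, so $\ell_\omega$ must be unbounded. I would state this at the end of the proof in one sentence.

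The only subtle point, and hence the main (mild) obstacle, is making sure to discard the trivial factors $x_i=1$ before bounding $k$; otherwise the inequality $k\le \sum_j\omega(x_j)$ fails. Apart from this bookkeeping, the argument consists of reading off the definition of $\ell_\omega$ twice, so no further ideas should be needed.
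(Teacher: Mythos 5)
Your argument is correct and follows essentially the same route as the paper's proof: the bounded case via $\ell_\omega(g)\le\omega(g)$, and the proper case by observing that a geodesic decomposition of an element of length less than $n$ has all factors in the finite set $\{h\mid \omega(h)<n\}$ and boundedly many of them. The paper compresses the proper case into one sentence; your version merely makes explicit the (correct) bookkeeping about discarding trivial factors and bounding the number of factors.
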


\begin{proof}
If $\omega $ is bounded, then so is $\ell_\omega$ since $\ell_\omega(g)\le \omega(g)$ for all $g\in G$ by definition. Further, assume that $\omega$ is proper. Then for every $n\in \NN$, only finitely many elements can occur in a geodesic decompositions of elements of length at most $n$. This obviously implies that $\ell_\omega$ is proper (and unbounded whenever $|G|=\infty$).
\end{proof}

\begin{cor}\label{Cor:BUdense}
If $G$ is a countably infinite group, then the sets of bounded and proper length functions are both dense in $L(G)$; in particular, the set of all unbounded length functions is dense in $L(G)$.
\end{cor}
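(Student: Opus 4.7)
The plan is to deduce density directly from the two preceding lemmas. Fix a length function $\ell\in L(G)$ and a basic open neighborhood of the form $W(\ell,F)$ as in \eqref{Eq:WlF}. By enlarging $F$ if necessary, I may assume $1\in F$ and $F^{-1}=F$, since this only shrinks the neighborhood. Set $m=\max\{\ell(f)\mid f\in F\}$ and enumerate $G\setminus F=\{g_1,g_2,\ldots\}$ (the enumeration is infinite because $G$ is countably infinite and $F$ is finite).

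For the bounded case, I would choose the constant sequence $M_i=m+1$ and form the weight function $\omega_M$ of Lemma~\ref{Lem:wM}. Since $M_i>m$, the hypothesis of Lemma~\ref{Lem:wM} is satisfied, so $\ell_{\omega_M}\in W(\ell,F)$. The weight function $\omega_M$ is bounded by $m+1$, so Lemma~\ref{Lem:BU} gives that $\ell_{\omega_M}$ is a bounded length function in the chosen neighborhood.

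For the proper case, I would instead take $M_i=m+i$, so that $M_i\to\infty$. Again $M_i>m$ for all $i$, so Lemma~\ref{Lem:wM} puts $\ell_{\omega_M}$ in $W(\ell,F)$. Now $\omega_M$ is proper: for every $n\in\NN$, the set $\{g\in G\mid \omega_M(g)<n\}$ consists of the finite set $F$ together with those $g_i$ for which $m+i<n$, hence is finite. Lemma~\ref{Lem:BU} then yields that $\ell_{\omega_M}$ is proper, and in particular unbounded since $|G|=\infty$. Thus every basic open set meets both the set of bounded length functions and the set of proper length functions, and the latter is contained in the set of unbounded length functions.

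There is essentially no obstacle here: the real work was done in Lemmas~\ref{Lem:wM} and~\ref{Lem:BU}, which respectively show how to modify $\ell$ outside a finite set without affecting its values on that set, and how boundedness/properness of a weight function transfers to the associated length function. The only care required is to ensure the chosen sequence $M$ satisfies the threshold condition $M_i>m$ demanded by Lemma~\ref{Lem:wM}, which is automatic for the two concrete choices above.
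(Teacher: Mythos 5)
Your proof is correct and is essentially identical to the paper's own argument: the paper likewise applies Lemma~\ref{Lem:wM} and Lemma~\ref{Lem:BU} with the choices $M_i=\max\{\ell(f)\mid f\in F\}+1$ and $M_i=\max\{\ell(f)\mid f\in F\}+i$ after normalizing $F$ so that $1\in F$ and $F=F^{-1}$. No issues.
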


\begin{proof}
We fix a length function $\ell\in  L(G)$ and consider any open neighborhood $U$ of $\ell$. By definition, there exists a finite subset $F\subseteq G$ such that $W(\ell, F)\subseteq U$. Without loss of generality, we can assume that $F=F^{-1}$ and $1\in F$. Applying Lemmas \ref{Lem:wM} and \ref{Lem:BU} to the sequence $M_i= \max\{ \ell(f)\mid f\in F\}+1$ (respectively, $M_i= \max\{ \ell(f)\mid f\in F\}+i$), we obtain a bounded (respectively, proper) length function in $W(\ell, F)$.
\end{proof}

\begin{proof}[Proof of Proposition \ref{Prop:P}]
The space $P=(\NN\cup \{ 0\}) ^G$ is Polish being a countable product of (discrete) Polish spaces. For every fixed $g,h\in G$, the set of all $\ell\in P$ satisfying conditions (L$_1$)--(L$_3$) in Definition~\ref{Def:LF} is open. Therefore, $L(G)$ is a $G_\delta $ subset of $P$. In particular, it is a Polish space by Proposition \ref{Prop:Gd}.  If $G$ is infinite, $L(G)$ is perfect by Corollary \ref{Cor:BUdense}.
\end{proof}

\section{Generic length functions and Cayley graphs}\label{Sec:CG}

\paragraph{3.1. The Extension Property.} We begin by recalling an equivalent definitions of the universal graph $U$ obtained by Moss in \cite[Theorem 4.1]{M}. Let $(S,\d_S)$, $(T,\d_T)$ be any metric spaces. We write $\bar s\cong\bar t$ for two tuples $\bar s=(s_1, \ldots, s_n)\in S^n$ and $\bar t=(t_1, \ldots, t_n)\in T^n$ if $\d_S(s_i, s_j)=\d_T(t_i, t_j)$ for all $i,j \in \{ 1, \ldots, n\}$. Moss proved that $U$ is the unique countable connected graph satisfying the following \emph{Extension Property}.

\begin{enumerate}
\item[(EP)] Let $n\in \NN$ and let $\bar a=(a_1, \ldots, a_n)$ be a tuple of vertices of $U$. Let $\bar b=(b_1, \ldots, b_n)$ be a tuple of vertices of another connected graph $\Gamma $ such that $\bar a\cong \bar b$. For any vertex $b$ of $\Gamma$, there exists a vertex $a$ of $U$ such that $(a_1, \ldots, a_n, a)\cong (b_1, \ldots, b_n, b)$.
\end{enumerate}

We will prove Theorem \ref{main1} by showing that (EP) holds for Cayley graphs associated to generic lengths functions on countable MIF groups. To this end, we need an auxiliary definition.

\begin{defn}
Let $G$ be a group, $\bar a=(a_1, \ldots, a_n)\in G^n$, $\ell\in  L(G)$.
We say that a tuple $\bar d=(d_1, \ldots, d_n)\in (\NN\cup\{ 0\})^n$ is \emph{$(\ell,\bar a)$-consistent}, if
\begin{equation}\label{Eq:cons}
d_i-d_j \le \ell(a_i^{-1} a_j)\le d_i+d_j
\end{equation}
for all $i,j\in \{1, \ldots, n\}$. For every tuples $\bar a\in G^n$ and $\bar d\in (\NN\cup \{ 0\})^n$, we denote by $D(\bar a, \bar d)$ the set of all $\ell\in  L(G)$ satisfying the following condition: if $\bar d$ is $(\ell, \bar a)$-consistent, then there is $g\in G$ such that
\begin{equation}\label{Eq:gai}
\ell(g^{-1} a_i)=d_i \;\;\; \forall \, i\in\{ 1, \ldots, n\}.
\end{equation}
\end{defn}

Note that $(\ell, \bar a)$-consistency of $\bar d$ is a necessary condition for the existence of an element $g$ satisfying (\ref{Eq:gai}).

\begin{lem}\label{Lem:D}
Let $G$ be a countable MIF group. For every $n\in \NN$ and every tuples $\bar a\in G^n$, $\bar d\in (\NN\cup\{ 0\})^n$, the set $D(\bar a, \bar d)$ is open and dense in $L(G)$.
\end{lem}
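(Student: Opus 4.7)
The plan is to handle openness and density separately.

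\textbf{Openness.} Given $\ell \in D(\bar a, \bar d)$: if $\bar d$ is $(\ell, \bar a)$-consistent, pick a witness $g\in G$ satisfying (\ref{Eq:gai}) and take $F_0 = \{a_i^{-1}a_j\}\cup\{g^{-1}a_i\}$; every $\ell' \in W(\ell, F_0)$ retains $g$ as witness. If not, set $F_0 = \{a_i^{-1}a_j\}$: the failure of some inequality in (\ref{Eq:cons}) persists across $W(\ell, F_0)$, making the defining implication vacuous.

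\textbf{Density.} For a basic neighborhood $W(\ell, F)$ with $F$ enlarged to be symmetric and to contain $1$, all $a_i$, and all $a_i^{-1}a_j$: the case of $\bar d$ not $(\ell, \bar a)$-consistent gives $W(\ell, F) \subseteq D(\bar a, \bar d)$ automatically, so I would assume consistency. Fix $M > \max_{f \in F}\ell(f) + \max_i d_i$, and for a parameter $g$ to be chosen later, define
\[
\omega_g(h) = \begin{cases}\ell(h) & \text{if } h \in F,\\ d_i & \text{if } h \in \{g^{-1}a_i, a_i^{-1}g\}\setminus F,\\ M & \text{otherwise},\end{cases}
\]
and take $\ell' := \ell_{\omega_g}$. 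Provided $g \notin \bigcup_i a_iF$ and $g^2 \neq a_ia_j$ whenever $d_i \neq d_j$, $\omega_g$ is well-defined and symmetric, and clearly $\ell_{\omega_g}(f) \le \ell(f)$ for $f\in F$ and $\ell_{\omega_g}(g^{-1}a_i) \le d_i$; the reverse inequalities are the crux.

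To prove them, consider a factorization $h = y_1\cdots y_r$ of total weight less than $M$ with target $h \in F\cup\{g^{-1}a_i\}$, so every $y_j \in F\cup\{g^{-1}a_k, a_k^{-1}g\}$. Encode $Y = (y_1,\ldots,y_r)$ over the abstract alphabet $\Sigma = F\cup\{P_1,\ldots,P_n, Q_1,\ldots,Q_n\}$ and lift via $P_k\mapsto x^{-1}a_k$, $Q_k\mapsto a_k^{-1}x$ to $w_Y(x) \in G\ast\langle x\rangle$; set $h_\ast := h$ if $h\in F$, and $h_\ast := x^{-1}a_i$ if $h = g^{-1}a_i$. If $w_Y(x) = h_\ast$ in $G\ast\langle x\rangle$, an induction on $r$ peeling off an innermost adjacent cancelling pair (either $Q_k \cdot (\text{F-letters with trivial product}) \cdot P_l$ reducing to the F-letter $a_k^{-1}a_l$, or $P_k \cdot (\text{F-letters with product } a_k^{-1}a_l) \cdot Q_l$ reducing to $1$) yields $\sum_j\omega_g(y_j) \ge \ell(h)$ or $\ge d_i$, using the $(\ell,\bar a)$-consistency bound $\ell(a_k^{-1}a_l)\le d_k+d_l$ to make each peeled block contribute non-negatively to the shortened word. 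Otherwise $w_Y(x)h_\ast^{-1}$ is a non-trivial element of $G\ast\langle x\rangle$; since only finitely many $(Y,h)$ pairs of length at most $M$ over the finite alphabet $\Sigma$ arise, these yield finitely many non-trivial mixed identity inequations. By Proposition~\ref{Prop:HO} (the corresponding existential $G$-sentence is witnessed by $x$ in $G\ast\langle x\rangle$, hence transfers to $G$ by universal equivalence), some $g$ simultaneously satisfies all of them plus the well-definedness constraints above.

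The main obstacle is the combinatorial peel-off induction: one must carefully track how the $x$-letters pair up in $G\ast\langle x\rangle$ and verify that the consistency hypothesis is exactly tight enough to absorb the weight of each cancelling block. Everything else reduces to MIF-driven genericity and finite bookkeeping.
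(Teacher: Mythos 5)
Your proposal is correct and follows essentially the same route as the paper: openness is handled identically, and for density the paper likewise enlarges $F$ to contain $A=\{a_i^{-1}a_j\}$, uses Proposition~\ref{Prop:HO} to pick $g$ avoiding a finite set of non-trivial words in $G\ast\langle x\rangle$ (your inequations $w_Y(x)h_*^{-1}\ne 1$ play the role of its sets $I_1$, $I_2$), defines the same weight $\omega_g$, and performs the same cancellation analysis in $G\ast\langle x\rangle$ driven by the two consistency inequalities --- your peel-off induction is just the paper's minimal-counterexample argument (geodesic decompositions with fewest letters from $\{(g^{-1}a_i)^{\pm 1}\}$) run forwards. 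One small slip: the condition for $\omega_g$ to be well-defined is $g^{-1}a_i\ne (g^{-1}a_j)^{-1}$, i.e.\ $ga_i^{-1}g\ne a_j$ (the paper excludes this via the word $a_j^{-1}xa_i^{-1}x\in I_2$), not $g^2\ne a_ia_j$; since this is still a non-trivial mixed equation, your mechanism for enforcing it goes through unchanged.
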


\begin{proof}
Throughout the proof, we fix some tuples $\bar a\in G^n$, $\bar d\in (\NN\cup\{ 0\})^n$, and let
$$
A=\{ a_i^{-1}a_j\mid i,j=1, \ldots , n\}.
$$

We first show that $D(\bar a, \bar d)$ is open. Let $\ell\in D(\bar a, \bar d)$. If $\bar d$ is not $(\ell,\bar a)$-consistent, then there are $i,j\in \{ 1, \ldots , n\}$ such that (\ref{Eq:cons}) fails. Obviously, the open neighborhood of $\ell$  consisting of all functions $k\in  L(G)$ such that $k (a_i^{-1}a_j)=\ell(a_i^{-1}a_j)$ is a subset of $D(\bar a, \bar d)$. Further, assume that $\bar d$ is $(\ell, \bar a)$-consistent. Since $\ell\in D(\bar a, \bar d)$, there exists $g\in G$ satisfying (\ref{Eq:gai}). Let $E=A\cup\{ g^{-1} a_i\mid i=1, \ldots, n\}$. Clearly, we have $W(\ell, E)\subseteq D(\bar a, \bar d)$, where $W(\ell, E)$ is defined by (\ref{Eq:WlF}).

Next, we show that $D(\bar a, \bar d)$ is dense in $L(G)$. It suffices to show that for every $\ell\in  L(G)$ and every finite subset $F\subseteq G$, the intersection $W(\ell, F)\cap  D(\bar a, \bar d)$ is non-empty. Without loss of generality, we can assume that $A\subseteq F$, $1\in F$, and $F=F^{-1}$.

If $\bar d$ is not $(\ell, \bar a)$-consistent, then $\ell\in W(\ell, F)\cap D(\bar a, \bar d)$ by definition. Henceforth, we assume that $\bar d$ is $(\ell, \bar a)$-consistent.
Let
$$
B=\bigcup_{i=1}^n a_iF
$$
and
$$
M=\max(\{ \ell(h)\mid h\in F\} \cup \{d_1, \ldots , d_n\}) +1.
$$
Consider the following subsets of $G\ast \langle x\rangle$:
$$
I_1=\{ xb^{-1} \mid b\in B\},
$$
$$
I_2= \{ f_1x^{\e_1}\ldots f_Mx^{\e_M} \mid \e_i\in \{ -1, 0, 1\}, \; f_i \in F\;\, \forall\, i=1, \ldots, M\}\setminus \{ 1\},
$$
and
$$I=I_1\cup I_2.$$
None of the elements of $I$ represents $1$ in $G\ast\langle x\rangle$. Therefore, $G\ast \langle x\rangle$ does not satisfy the universal sentence
$$
\forall \, g \; \left(\bigvee_{w(x)\in I} w(g)=1\right).
$$
By Proposition \ref{Prop:HO}, there exists $g\in G$ such that $w(g)\ne 1$ for all $w(x)\in I$. We fix any such an element $g$, let
$$
K=\{ (g^{-1}a_i)^{\pm 1}\mid i=1, \ldots , n\},
$$
and define the symmetric weight function
$$
\omega_g(h)=
\left\{
\begin{array}{cl}
  d_i, & {\rm if}\; h=(g^{-1}a_i)^{\pm 1}\; {\rm for\; some}\; i=1, \ldots, n, \\
 \ell(h), & {\rm if}\; h\in F,\\
 M, & {\rm if}\; h\notin K\cup F.
\end{array}
\right.
$$
Since $g$ does not satisfy any equation $w(x)=1$ for $w(x)\in I_1$, we have $g\notin B$. Together with the assumption $F=F^{-1}$, this ensures that $(g^{-1}a_i)^{\pm 1}\notin F$ for all $i$. Further, we have $g^{-1}a_i\ne (g^{-1}a_j)^{-1}$ for all $i$ and $j$; indeed, otherwise $g$ satisfies an equation of the form $a_j^{-1}xa_i^{-1}x=1$, whose left side belongs to $I_2$. Thus, $\omega_g$ is well-defined.

We will prove that
\begin{equation}\label{l_in_W}
\ell_{\omega_g}\in W(\ell, F)\cap D(\bar a, \bar d).
\end{equation}
Arguing by contradiction, assume that $\ell_{\omega_g}\notin W(\ell,F)$ or $\ell_{\omega_g}\notin D(\bar a, \bar d)$. We consider these possibilities separately.

\noindent{\it Case 1.} Suppose first that $\ell_{\omega_g}\notin W(\ell,F)$. Then $\ell_{\omega_g}(f)\ne \ell(f)$ for some $f\in F$. We will show that $w(g)=1$ for some $w(x)\in I_2$ in this case, thus reaching a contradiction.

Since $\ell_{\omega_g}(f)\le \omega_g(f)=\ell(f)$, we have $\ell_{\omega_g}(f)< \ell(f)$. This means that there exists a geodesic decomposition
\begin{equation}\label{Eq:fdec}
f=y_1\ldots y_m,
\end{equation}
where $y_1, \ldots, y_m\in G$, and
\begin{equation}\label{Eq:lgf}
\sum\limits_{i=1}^m \omega_g(y_i)< \ell(f).
\end{equation}
Since $\ell_{\omega_g}(f)\le \omega_g(f)< M$, we have $y_1, \ldots, y_m\in F\cup K$ and $m<M$. We additionally assume that (\ref{Eq:fdec}) has the least number of multiples $y_i\in K$ among all such (geodesic) decompositions. For each $i=1, \ldots, m$, we define $z_i\in G\ast \langle x\rangle$ by the rule
\begin{equation}\label{Eq:zdef}
z_i =
\left\{
\begin{array}{cl}
  y_i, &\; {\rm if}\; y_i\in F, \\
 x^{-1}a_{j}, &\; {\rm if}\; y_i=g^{-1}a_{j} \; {\rm for \; some} \; j=1, \ldots, n,\\
 a_{j}^{-1}x, &\; {\rm if}\; y_i=(g^{-1}a_{j})^{-1}=a_{j}^{-1}g \; {\rm for \; some} \; j=1, \ldots, n.
\end{array}
\right.
\end{equation}
Let $w(x)=f^{-1}z_1\ldots z_m$. Clearly, $g$ satisfies the equation $w(g)=1$. Since $m<M$, we obviously have $w(x)\in I_2\cup \{ 1\}$. It remains to show that $w(x)$ is a non-trivial  element of $G\ast \langle x\rangle$.

Assume first that $z_i\in G$ for all $i$. Then $y_i\in F$ for all $i$ and we obtain
$$
\sum\limits_{i=1}^m \omega_g(y_i) =\sum\limits_{i=1}^m \ell(y_i)\ge \ell (y_1\ldots y_m)=\ell(f),
$$
This contradicts (\ref{Eq:lgf}). Thus, $z_i\notin G$ for at least one $i$ and at least one of the letters $x$, $x^{-1}$ indeed occurs in the word $w(x)$.

The word $w(x)$ can still represent $1$ in $G\ast \langle x\rangle$ if all occurrences of $x$ and $x^{-1}$ cancel in the normal form of $w(x)$. To rule out this possibility, we first note that no subword $z_{i}z_{i+1}\ldots z_{i+k}$ ($k\ge 0$) of $w(x)$ can represent $1$ in $G\ast\langle x\rangle$; for otherwise, the decomposition $f=y_1\ldots y_m$ can be shortened by removing $y_iy_{i+1}\ldots y_{i+k}$, which contradicts the assumption that (\ref{Eq:fdec}) is geodesic. Therefore, the equality $w(x)=1$ in $G\ast \langle x\rangle$ implies the existence of  $i\in \{ 1, \ldots, m-1\}$ such that $z_i=a_{j_1}^{-1} x$ and $z_{i+1} =x^{-1}a_{j_2}$ for some $j_1, j_2\in \{ 1, \ldots, n\}$. In this case, we can replace the product $y_{i} y_{i+1}$ in (\ref{Eq:fdec}) with the single element $a_{j_1}^{-1} a_{j_2}\in A\subseteq F$. The obtained decomposition of $f$ contains fewer elements from $K$ and is also geodesic since
$$
\omega _g(a_{j_1}^{-1} a_{j_2})=\ell(a_{j_1}^{-1} a_{j_2})\le d_{j_1}+d_{j_2}= \omega_g(a_{j_1}^{-1}g)+ \omega_g(g^{-1}a_{j_2})= \omega_g(y_i) +\omega_g(y_{i+1}).
$$
by the right inequality in (\ref{Eq:cons}). This contradicts our choice of the decomposition (\ref{Eq:fdec}). Thus, $w(x)\ne 1$ in $G\ast \langle x\rangle$.

\noindent{\it Case 2.} Suppose now that $\ell_{\omega_g}\notin D(\bar a, \bar d)$ and $\ell_{\omega_g}(f)=\ell(f)$ for all $f\in F$. The latter assumption implies that $\bar d$ is $(\ell_{\omega_g}, \bar a)$-consistent. Therefore, we have $\ell_{\omega_g}(g^{-1}a_t) \ne d_t$ for some $t\in \{ 1, \ldots, n\}$. Since $\ell_{\omega_g}(g^{-1}a_t)\le \omega_g(g^{-1}a_t)= d_t$, we actually have $\ell_{\omega_g}(g^{-1}a_t) < d_t$ in this case. Therefore, there exists a geodesic decomposition
\begin{equation}\label{Eq:gadec}
g^{-1}a_t=y_0\ldots y_m,
\end{equation}
where $y_0, \ldots, y_m\in G$, and
\begin{equation}\label{Eq:lgga}
\sum\limits_{i=0}^m \omega_g(y_i)< d_t.
\end{equation}
As in Case 1, we show that $y_0, \ldots, y_m\in F\cup K$ and assume that (\ref{Eq:gadec}) has the least number of elements $y_i\in K$ among all such decompositions.

From this point, our argument is similar to that in Case 1. For every $i=0, \ldots, m$, we define $z_i$ by (\ref{Eq:zdef}) and consider the element $u(x)= a_t^{-1}x z_0\ldots z_m\in G\ast\langle x\rangle$. Clearly, $u(g)=1$ in $G$. By (\ref{Eq:lgga}), we have $m+1 <d_t<M$. Hence, $u(x)\in I_2\cup \{ 1\}$.  We will show that, in fact, $u(x)\ne 1$ in $G\ast \langle x\rangle$, i.e., $u(x)\in I_2$, thus reaching a contradiction.

Assume that all occurrences of the letter $x$ in the word $u(x)$ cancel after reducing it to the normal form. As above, we observe that for every $i\in \{ 0, \ldots, m\}$, we have $z_0\ldots z_i\ne 1$ in $G\ast\langle x\rangle $. Thus the first occurrence of $x$ in $u(x)$ cancels in the normal form of $u(x)$ only if $z_0= x^{-1}a_j$ for some $j$. Hence $y_0=g^{-1}a_j$. Plugging this in (\ref{Eq:gadec}), we obtain
\begin{equation}\label{Eq:ajat}
a_j^{-1}a_t= y_1\ldots y_m.
\end{equation}
Note that $\omega_g(y_0)=\omega_g(g^{-1}a_j)=d_j$. Combining the left inequality in (\ref{Eq:cons}) with (\ref{Eq:lgga}) and (\ref{Eq:ajat}), we obtain
$$\ell(a_t^{-1}a_j)  \ge  d_t - d_j > \sum\limits_{i=0}^m\omega _g(y_i) - d_j = \sum\limits_{i=1}^m\omega _g(y_i)\ge \ell(a_t^{-1}a_j).$$
This contradiction shows that $u(x)\ne 1$.

Summarizing the discussion above, we conclude that both Case 1 and Case 2 are impossible. Thus, we have (\ref{l_in_W}). Therefore, $D(\bar a, \bar d)$ is dense in $L(G)$.
\end{proof}

\paragraph{3.2. Genericity of word lengths on countable MIF groups.} Given a group $G$ and an element $g\in G\setminus\{ 1\}$, we define
$$
C(g)=\{ \ell\in  L(G) \mid \exists \, h\in G\;\, \ell(h)=\ell(g)-1  \wedge  \ell(h^{-1}g)=1\}.
$$
Let also
$$
C=\bigcap\limits_{g\in G\setminus\{ 1\}} C(g).
$$

\begin{lem}\label{Lem:Clf}
For any group $G$, every $\ell\in C$ is a word length.
\end{lem}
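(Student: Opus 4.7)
The plan is to show that $\ell$ is a word length with respect to the symmetric generating candidate $X_\ell = \{g \in G \mid \ell(g) = 1\}$, which is symmetric by axiom (L$_2$). I will prove simultaneously that $X_\ell$ generates $G$ and that $\ell(g)$ equals the $X_\ell$-word length of $g$ for every $g \in G$.

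The main step is an induction on $\ell(g)$ establishing that every $g \in G$ admits an expression $g = x_1 \cdots x_n$ with $x_i \in X_\ell$ and $n = \ell(g)$. The base case $\ell(g) = 0$ uses (L$_1$) to give $g = 1$, represented by the empty product. For the inductive step, suppose $\ell(g) = n \geq 1$. Since $\ell \in C \subseteq C(g)$, we may choose $h \in G$ with $\ell(h) = n - 1$ and $\ell(h^{-1}g) = 1$, so $h^{-1}g \in X_\ell$. By the induction hypothesis applied to $h$, we can write $h = x_1 \cdots x_{n-1}$ with each $x_i \in X_\ell$; setting $x_n = h^{-1}g$ yields $g = x_1 \cdots x_n$ as desired. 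In particular, $X_\ell$ generates $G$, and the $X_\ell$-word length of $g$ is at most $\ell(g)$.

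For the reverse inequality, suppose $g = x_1 \cdots x_k$ with $x_1, \ldots, x_k \in X_\ell$. Repeated application of the triangle inequality (L$_3$) gives
\[
\ell(g) \le \sum_{i=1}^{k} \ell(x_i) = k,
\]
so no shorter $X_\ell$-expression for $g$ exists. Combining the two bounds shows that $\ell$ coincides with the word length associated to the generating set $X_\ell$, completing the proof.

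I do not anticipate any real obstacle: the condition defining $C(g)$ was tailor-made to provide exactly the predecessor $h$ needed to run the induction, and the lower bound is immediate from subadditivity. The only point one has to be slightly careful with is that the element $h^{-1}g$ produced by $C(g)$ lies in $X_\ell$ (not merely in $X_\ell \cup X_\ell^{-1}$), but this is automatic since $X_\ell$ is already symmetric.
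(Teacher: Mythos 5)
Your proof is correct and follows essentially the same route as the paper's: induction on $\ell(g)$ using the element $h$ supplied by the condition defining $C(g)$ for the upper bound on the word length, and subadditivity (L$_3$) for the lower bound. The only cosmetic difference is that you phrase the induction in terms of explicit products $x_1\cdots x_n$ rather than the word-length function $|\cdot|_{X_\ell}$ on $\langle X_\ell\rangle$, and you make explicit the (correct) remark that symmetry of $X_\ell$ handles the alphabet $X_\ell\cup X_\ell^{-1}$.
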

\begin{proof}
Let $\ell\in C$ and let $X_\ell=\{ x\in G \mid \ell(x)=1\}$. We denote by $|\cdot|_{X_\ell}$ the word length on the subgroup $\langle X_\ell\rangle$ with respect to $X_\ell$. Our goal is to show that $\langle X_\ell\rangle=G$ and $\ell(g)=|g|_{X_\ell}$ for all $g\in G$.

For every $g\in \langle X_\ell\rangle$, we have $\ell(g)\le |g|_{X_\ell}$ by the triangle inequality. Conversely, we will prove that $g\in \langle X_\ell\rangle$ and $|g|_{X_\ell}\le \ell(g)$ for every $g\in G$ by induction on $\ell(g)$. If $\ell(g)=0$, this is obvious. Suppose that $\ell(g)\ge 1$. Since $\ell\in C\subseteq C(g)$, there exists $h\in G$ such that $\ell(h)=\ell(g)-1$ and $h^{-1}g\in X_\ell$. By the inductive assumption, we have $h\in \langle X_\ell\rangle$ and $|h|_{X_\ell}=\ell(h)$. Hence, $g= h (h^{-1}g)\in \langle X_\ell\rangle$ and
$$
|g|_{X_\ell}= |h (h^{-1}g)|_{X_\ell} \le |h|_{X_\ell} + |h^{-1}g|_{X_\ell}= \ell(h) +1=\ell(g).
$$
\end{proof}

\begin{lem}\label{Lem:C}
For any countable MIF group $G$, the set $C$ is comeager in $L(G)$.
\end{lem}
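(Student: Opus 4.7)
The plan is to write $C=\bigcap_{g\in G\setminus\{1\}}C(g)$ as a countable intersection indexed by the countable set $G\setminus\{1\}$, and to prove that each $C(g)$ is open and dense in $L(G)$; comeagerness of $C$ then follows from the Baire category theorem.

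Openness of $C(g)$ is immediate from unpacking the definition: if $\ell\in C(g)$ and $h\in G$ witnesses membership, so that $\ell(h)=\ell(g)-1$ and $\ell(h^{-1}g)=1$, then the basic open neighborhood $W(\ell,\{g,h,h^{-1}g\})$ is entirely contained in $C(g)$, since membership in $C(g)$ only constrains the values of a length function on those three elements (with $h$ serving as the witness for every $\ell'$ in the neighborhood).

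The substantive step is density, and this is where Lemma \ref{Lem:D} does all the work. Fix $\ell\in L(G)$ and a basic neighborhood $W(\ell,F)$; without loss of generality, assume $g\in F$. Set $n=\ell(g)$, noting $n\ge 1$ by (L$_1$), and apply Lemma \ref{Lem:D} to the tuples $\bar a=(1,g)$ and $\bar d=(n-1,1)$. A direct check of (\ref{Eq:cons}) shows $\bar d$ is $(\ell,\bar a)$-consistent: using $\ell(1)=0$ and $\ell(g)=\ell(g^{-1})=n$, the only nontrivial inequalities read $n-2\le n\le n$ and $2-n\le n$, both of which hold. Since $D(\bar a,\bar d)$ is dense by Lemma \ref{Lem:D}, we may pick $\ell'\in W(\ell,F)\cap D(\bar a,\bar d)$. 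Because $g\in F$, we still have $\ell'(g)=n$, so $\bar d$ remains $(\ell',\bar a)$-consistent; the defining property of $D(\bar a,\bar d)$ then produces $g'\in G$ with $\ell'((g')^{-1})=n-1$ and $\ell'((g')^{-1}g)=1$. Setting $h=g'$ and invoking (L$_2$), we obtain $\ell'(h)=n-1=\ell'(g)-1$ and $\ell'(h^{-1}g)=1$, so $\ell'\in W(\ell,F)\cap C(g)$.

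There is essentially no obstacle here beyond choosing the right data to feed into Lemma \ref{Lem:D}: all of the hard work, including the use of the MIF hypothesis through Proposition \ref{Prop:HO}, has already been done in the proof of that lemma. The proof of Lemma \ref{Lem:C} is purely a matter of recognizing that asking for an $h$ at $\ell$-distance $\ell(g)-1$ from $1$ and $\ell$-distance $1$ from $g$ is exactly the special instance of the extension property encoded by $D((1,g),(\ell(g)-1,1))$.
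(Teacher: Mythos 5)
Your proof is correct and follows essentially the same route as the paper: both reduce to showing each $C(g)$ is open and dense, and both obtain density by applying Lemma \ref{Lem:D} to $\bar a=(1,g)$ and $\bar d=(\ell(g)-1,1)$ after verifying $(\ell,\bar a)$-consistency. Your version merely spells out the openness witness and the consistency inequalities that the paper leaves implicit.
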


\begin{proof}
It suffices to show that $C(g)$ is comeager in $L(G)$ for every $g\in G\setminus \{ 1\}$. Fix any $g\in G\setminus \{ 1\}$. Obviously, $C(g)$ is open. Further, let $\ell\in  L(G)$, $\bar a= ( 1, g)$, $\bar d=(\ell(g)-1, 1)$. For every $k\in W(\ell, \{ g\} )$, the tuple $\bar d$ is $(k, \bar a)$-consistent. Let $U$ be any open neighborhood of $\ell$ in $L(G)$. By Lemma \ref{Lem:D}, $U\cap W(\ell, g)$ contains a length function $k$ such that there exists $h\in G$ satisfying $k(h)=\ell(g)-1=k(g)-1$ and $k(h^{-1}g)=1$. In particular, $k\in C(g)\cap U$. Thus, $C(g)$ is dense in $L(G)$.
\end{proof}

\begin{proof}[Proof of Theorem \ref{main1}]
By Lemmas \ref{Lem:Clf} and \ref{Lem:C}, there is a comeager subset $C\subseteq L(G)$ such that every $\ell\in C$ is a word length. Let
$$
D=\bigcap_{n\in \NN}\bigcap_{\bar a\in G^n}\bigcap_{\bar d\in (\NN\cup\{ 0\})^n} D(\bar a, \bar d).
$$
By Lemma \ref{Lem:D}, $D$ is also comeager in $L(G)$. To prove the theorem, it suffices to show that the Cayey graph corresponding to every $\ell\in C\cap D$ satisfies the Extension Property (EP).

Fix any $\ell\in C\cap D$. Let $\Gamma $ be a connected graph, and let $\bar b=(b_1,\ldots, b_n)$ be a tuple of vertices of $\Gamma$ for some $n\in \NN$. For any vertex $b$ of $\Gamma$, we define
$$
d_i=\d_\Gamma(b, b_i),
$$
where $\d_\Gamma$ is the standard metric in $\Gamma$. Suppose that $\bar a=(a_1, \ldots, a_n)\in G^n$ and $\bar a \cong \bar b$ with respect to the distance induced by $\ell$ on $\bar a$; this means that $\ell (a_i^{-1}a_j)=\d_\Gamma (b_i, b_j)$ for every $i, j\in \{ 1, \ldots, n\}$. Note that $$d_i-d_j\le \d_\Gamma (b_i, b_j)\le d_i+d_j$$
by the triangle inequality. Therefore, the tuple $\bar d$ is $(\ell, a)$-consistent. Since $\ell \in D\subseteq D(\bar a, \bar d)$, there is $g\in G$ such that $\ell(g^{-1} a_i)=d_i$ for all $i$. Thus, the condition (EP) holds for the Caley graph corresponding to $\ell$ and the theorem follows.
\end{proof}

\begin{rem}\label{Rem:ab} A similar idea can be used to prove the analogue of Theorem \ref{main1} for the group $G=\ZZ$ or, more generally, for any non-trivial, torsion-free, abelian group. We leave this as an exercise for the interested reader.
\end{rem}

The conclusion of Theorem \ref{main1} fails for infinite groups in general. We discuss two examples.

\begin{ex}\label{Ex:ab}
Let $G=\ZZ/4\ZZ \times A$, where $A=(\ZZ/2\ZZ)^\infty$. Consider the length function $\ell \in L(G)$ defined by
\begin{equation}\label{Eq:defl}
\ell (g)=3\;\;\; \forall\, g\in G\setminus\{ 1\}.
\end{equation}
Let $g=(2,0)\in G$. We claim that the open neighborhood $W(\ell, \{ g\})$ contains no word lengths. Indeed, suppose that $k\in W(\ell, \{ g\})$ is a word length. Every generating set of $G$ must contain an element $x$ such that the natural projection $G\to \ZZ/4\ZZ$ sends $x$ to a generator of $\ZZ/4\ZZ$. For every such $x$, we have $k(x)= 1$ and $x^2=g$. Using the triangle inequality, we obtain $\ell (g)=k (g)=k(x^2) \le 2$, which contradicts (\ref{Eq:defl}).
\end{ex}

\begin{ex}\label{Ex:vc}
Let $G=\langle a,b \mid b^4=1,\; b^{-1}ab=a^{-1}\rangle $. It is easy to see that every element $g\in G$ can be written in the form $g=a^\alpha b^\beta$, where $\alpha\in \ZZ$ and $\beta\in \{ 0, 1,2,3\}$. Further, if $\beta$ is odd, we have $g^2=b^2$, which implies
$g^{-1}= b^{-2}g=(g^{-1}b^2)^{-1}$. If $\beta $ is even, we have $bgb^{-1}=g^{-1}$, which implies $g^{-1}b=bg=b^{-3}g=(g^{-1}b^3)^{-1}$. Thus, every $g\in G$ satisfies one of the equalities
\begin{equation}\label{Eq:g}
g^{-1}=(g^{-1}b^2)^{-1}\;\;\; {\rm or}\;\;\; g^{-1}b=(g^{-1}b^3)^{-1}.
\end{equation}
Let $\ell\in L(G)$ be a word length and let $\Delta $ be the corresponding Cayley graph. For every vertex $g$ of $\Delta$, we have
$$
\d_\Delta(g, 1)= \ell (g^{-1})=\ell ((g^{-1}b^2)^{-1})= \ell (g^{-1}b^2)=\d_\Delta (g, b^2)
$$
or
$$
\d_\Delta(g, b)= \ell (g^{-1}b)=\ell ((g^{-1}b^3)^{-1})= \ell (g^{-1}b^3)=\d_\Delta (g, b^3)
$$
by (\ref{Eq:g}). However, given $\Delta$, we can always construct a graph $\Gamma $ containing vertices $v_1, \ldots, v_4$ and $w_0$ such that $(v_1, v_2, v_3, v_4)\cong (1,b,b^2, b^3)$ (we think of $1$, $b$, $b^2$, and $b^3$ as vertices of $\Delta $ here), $\d_\Gamma(w_0, v_1)\ne \d_\Gamma(w_0, v_3)$, and $\d_\Gamma(w_0, v_2)\ne \d_\Gamma(w_0, v_4)$. Indeed, denote $m=\d_\Delta(1,b)$ and define $\Gamma$ by adding $m$ new vertices $w_0, w_1, \cdots, w_{m-1}$ to the vertex set $G$ of $\Delta$ and connecting $1$ and $w_0$, $w_i$ and $w_{i+1}$ for each $i\in\{0,\cdots,m-2\}$, and $w_{m-1}$ and $b$. In another words, we add to $\Delta$ a new path of length $m+1$ that connects $1$ and $b$. Note that for any $g,h\in G$, if a path $p$ from $g$ to $h$ in $\Gamma$ without backtracking contains some new vertex $w_i$, then $p$ contains all the new vertices $w_0, \cdots, w_{m-1}$. Hence, by $\d_\Delta(1,b)<m+1$, we have $\d_\Gamma(g,h)=\d_\Delta(g,h)$ for any $g,h\in G$. Similarly, any path $p$ from $w_0$ to $b^3$ in $\Gamma$ without backtracking either (i) passes $1$ or (ii) contains all of $w_1,\cdots,w_{m-1}$. Let $|p|$ be the length of $p$, then in case (i), we have
$$
|p|
\ge 1+\d_\Gamma(1,b^3)
= 1+\d_\Delta(1,b^3)
=1+\d_\Delta(1,b)
=1+m.
$$
In case (ii), we have $|p|\ge m+\d_\Gamma(b,b^3)\ge m+1$. Thus, we have $\d_\Gamma(w_0,b^3)>m\ge \d_\Gamma(w_0,b)$. Also, since $w_0$ is connected by an edge only to $1$ and $w_1$, we have $\d_\Gamma(w_0, 1)=1 <\d_\Gamma(w_0, b^2)$. Hence, vertices $v_1, v_2, v_3, v_4$ of $\Gamma$ defined by $v_i=b^{i-1}$ satisfy the above property for $\Gamma$ together with $w_0$. Therefore, $\Delta$ does not satisfy the Extension Property. Thus, no Cayley graph of the group $G$ is isomorphic to $U$.
\end{ex}

\paragraph{3.3. Bi-Lipschitz equivalence of length functions.}
For every group $G$, we define
$$R_1=\{ (\ell_1, \ell_2)\in L(G)\times L(G)\mid \ell_1\not \preccurlyeq \ell_2 \}$$
and
$$R_2=\{ (\ell_1, \ell_2)\in L(G)\times L(G)\mid \ell_2\not \preccurlyeq \ell_1 \}.$$
Clearly, $R_1\cap R_2$ is precisely the set of all pairs $(\ell_1, \ell_2)\in L(G)\times L(G)$ such that $\ell_1$ and $\ell_2$ are $\preccurlyeq$-incomparable.

\begin{lem}\label{Lem:R1R2}
For any countably infinite group $G$,  the sets $R_1$ and $R_2$ are dense $G_\delta$ (in particular, comeager) in $L(G)\times L(G)$.
\end{lem}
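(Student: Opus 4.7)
The plan is to handle the $G_\delta$ property and density separately. For the $G_\delta$ part, I would unpack $\ell_1 \not\preccurlyeq \ell_2$ as: for every $N\in \NN$ there exists $g\in G\setminus\{1\}$ with $\ell_1(g) > N\,\ell_2(g)$. This exhibits $R_1$ as $\bigcap_{N\in \NN}\bigcup_{g\neq 1}\{(\ell_1,\ell_2)\in L(G)\times L(G): \ell_1(g) > N\,\ell_2(g)\}$. Each set inside the union is open, since membership depends only on the values of $\ell_1$ and $\ell_2$ at the single point $g$. Hence $R_1$ is $G_\delta$, and the same argument applies to $R_2$.

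For density of $R_1$, I would mimic the proof of Corollary \ref{Cor:BUdense}, modifying two length functions simultaneously. Fix a basic open neighborhood $W(\ell_1, F_1)\times W(\ell_2, F_2)$; after enlarging both $F_1$ and $F_2$ to a common finite symmetric set $F$ containing $1$, it suffices to find a pair inside $W(\ell_1, F)\times W(\ell_2, F)$ lying in $R_1$. Set $C=\max\{\ell_1(f),\ell_2(f) : f\in F\}$ and enumerate $G\setminus F=\{g_1, g_2,\ldots\}$. Define two weight functions that agree with $\ell_1, \ell_2$ on $F$ but behave very differently outside: let $\omega(g_i)=C+i$ for each $i$ (so $\omega$ is proper), and let $\omega'$ send every element of $G\setminus F$ to $C+1$ (so $\omega'$ is bounded). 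By Lemma \ref{Lem:wM}, $\ell_\omega \in W(\ell_1, F)$ and $\ell_{\omega'}\in W(\ell_2, F)$. By Lemma \ref{Lem:BU}, $\ell_\omega$ is unbounded on $G$, whereas $\ell_{\omega'}\leq \omega'\leq C+1$ pointwise. Therefore $\ell_\omega/\ell_{\omega'}$ is unbounded on $G\setminus\{1\}$, giving $(\ell_\omega,\ell_{\omega'})\in R_1$ inside the prescribed neighborhood. Density of $R_2$ follows by swapping the roles of $\omega$ and $\omega'$.

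The only detail requiring care is the application of Lemma \ref{Lem:wM}, which demands that the new weights assigned on $G\setminus F$ exceed $\max\{\ell(f):f\in F\}$; this is precisely why both $\omega$ and $\omega'$ are forced to take values at least $C+1$ outside $F$. Beyond this bookkeeping there is no genuine obstacle: the essential idea is that the two coordinates of the point in $L(G)\times L(G)$ can be perturbed independently on $G\setminus F$, and it is cheap to make one coordinate proper (hence unbounded) while keeping the other bounded, producing the desired asymmetry between the two length functions.
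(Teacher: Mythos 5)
Your proposal is correct and follows essentially the same route as the paper: the same decomposition of $R_1$ into a countable intersection of open sets indexed by the Lipschitz constant, and the same density argument pairing an unbounded (proper) length function in the first neighborhood with a bounded one in the second, both obtained from Lemmas \ref{Lem:wM} and \ref{Lem:BU}. The only cosmetic difference is that you exhibit a single witness pair lying in all of $R_1$ at once, while the paper verifies density of each $R_1(C)$ separately and then intersects.
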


\begin{proof}
We will prove the claim for $R_1$, the proof for $R_2$ is symmetric. For any $C \in \mathbb{N}$, let
$$
R_1(C) = \{(\ell_1, \ell_2)\in L(G)\times L(G) \mid \exists g \in G,\, \ell_1(g)> C\ell_2(g).\}
$$
The set $R_1(C)$ is open by the definition of the topology on $L(G)$. Further, let $\ell_1, \ell_2\in L(G)$ and let $U$ and $V$ be open neighborhoods of $\ell_1$ and $\ell_2$, respectively. By Lemma \ref{Lem:BU}, there exists an unbounded $k_1\in U$ and bounded $k_2\in V$. Obviously, $(k_1, k_2)\in R_1(C)$. Thus, $R_1(C)$ is dense in $L(G)\times L(G)$. It remains to note that $R_1=\bigcap_{C\in \NN} R_1(C)$ and apply the Baire category theorem.
\end{proof}

\begin{proof}[Proof of Theorem \ref{main2}]
Let $D$ be a comeger subset of $L(G)$. By the Kuratowski-Ulam theorem,  $D\times D$ is a comeager subset of $L(G)\times L(G)$ (see Theorem 8.41 in \cite{Kec}). Combining this with Lemma \ref{Lem:R1R2}, we obtain that the set $R=(D\times D)\cap R_1\cap R_2$ is also comeager in $L(G)\times L(G)$. By Theorem \ref{Thm:Myc}, there exists a subset $C\subseteq L(G)$ of cardinality $2^{\aleph_0}$ such that $\{ (\ell_1, \ell_2)\in C\times C\mid \ell_1\ne \ell_2\}\subseteq R$. Clearly, $C\subseteq D$ and distinct elements of $C$ are $\preccurlyeq$-incomparable.
\end{proof}

The following lemma can easily be generalized to length functions obtained from cobounded actions on an arbitrary geodesic space. We restrict to the particular case of actions on Cayley graphs for simplicity. For a group $G$ generated by a set $X$, we denote by $Cay(G,X)$ the corresponding (unlabelled, unoriented) Cayley graph. Recall that the vertices of $G$ are elements of $G$ and two vertices $a, b\in G$ are connected by an edge if and only if $a^{-1}b\in X$. The action of $G$ on itself by left multiplication gives rise to a $G$-action on $Cay(G,X)$ by automorphisms. Let $\Gamma$ be a fixed graph. If $Cay(G,X)$ is isomorphic to $\Gamma$, every isomorphism $\iota\colon Cay(G,X)\to\Gamma$ uniquely defines a homomorphism $\alpha\colon G\to \Aut(\Gamma)$ such that the following diagram is commutative for every $g\in G$:
\begin{equation}\label{Eq:CD}
\begin{tikzcd}
Cay(G,X) \arrow{r}{\iota} \arrow{d}{g} & \Gamma \arrow{d}{\alpha(g)} \\%
Cay(G,X) \arrow{r}{\iota}& \Gamma
\end{tikzcd}
\end{equation}
Clearly, $\alpha $ is injective and the action of $\alpha(G)$ on the set of vertices of $\Gamma$ is regular. We call $\alpha$ a \emph{regular representation of $G$ by automorphisms of $\Gamma$}.

If $\iota_1, \iota_2\colon Cay(G,X)\to \Gamma$ are two isomorphisms, then there exists $t\in \Aut (\Gamma)$ such that $\iota _2=t\circ \iota_1$. Let $\alpha_1,\alpha_2\colon G\to Aut(\Gamma)$ be the regular representations corresponding to $\iota_1$ and $\iota_2$. By definition, we have
$$
\alpha_1(g)= \iota_1\circ g\circ \iota_1^{-1} = t^{-1}\circ \iota_2\circ g\circ \iota_2^{-1}\circ t = t^{-1} \alpha_2 (g) t
$$
for all $g\in G$. This prompts the following.

\begin{defn}
Two homomorphisms $\alpha_1, \alpha_2\colon G\to \Aut (\Gamma)$ are \emph{equivalent} if there exists $t\in \Aut(\Gamma)$ such that
\begin{equation}\label{Eq:conj}
\alpha_1(g)=t^{-1} \alpha_2(g) t\;\;\; \forall \, g\in G.
\end{equation}
\end{defn}

In particular, regular representations corresponding to distinct isomorphisms $Cay(G,X)\to \Gamma$ are equivalent.

\begin{lem}\label{Lem:conj}
Let $G$ be a group, $\ell_1$, $\ell_2$ word lengths on $G$. Suppose that the Caley graphs of $G$ associated to $\ell_1$ and $\ell_2$ are isomorphic to a certain graph $\Gamma$; that is, there exist graph isomorphisms $\iota_i\colon Cay(G, X_i)\to \Gamma$, where $X_i=\{ g\in G\mid \ell_i(g)=1\}$ for $i=1,2$. Let $\alpha_1, \alpha_2\colon G\to \Aut(\Gamma)$ be the corresponding regular representations. If $\alpha_1$ and $\alpha_2$ are equivalent, then $\ell_1$ and $\ell_2$ are bi-Lipschitz equivalent.
\end{lem}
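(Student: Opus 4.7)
The plan is to transport both length functions to the common graph $\Gamma$ and show that the equivalence of $\alpha_1, \alpha_2$ amounts to changing the basepoint of the orbit map, which perturbs distances by at most an additive constant.

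First, I would set $v_i = \iota_i(1_G) \in \Gamma$ for $i = 1, 2$. The commutative diagram (\ref{Eq:CD}) defining $\alpha_i$, specialized at the vertex $1_G$, gives $\iota_i(g) = \alpha_i(g)(v_i)$ for every $g \in G$. Since $\iota_i\colon Cay(G, X_i) \to \Gamma$ is a graph isomorphism, it is an isometry for the graph metrics, so
$$\ell_i(g) \;=\; d_\Gamma\bigl(\iota_i(1_G), \iota_i(g)\bigr) \;=\; d_\Gamma\bigl(v_i, \alpha_i(g)(v_i)\bigr)$$
for every $g \in G$. This identifies each $\ell_i$ with the displacement function of a point under the action $\alpha_i$.

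Next, I would substitute the equivalence $\alpha_1(g) = t^{-1} \alpha_2(g) t$ into the formula for $\ell_1$ and apply the isometry $t \in \Aut(\Gamma)$ to both entries of the distance, obtaining
$$\ell_1(g) \;=\; d_\Gamma\bigl(t(v_1), \alpha_2(g)(t(v_1))\bigr).$$
Setting $K = d_\Gamma(v_2, t(v_1))$, the triangle inequality combined with the fact that $\alpha_2(g)$ is an isometry of $\Gamma$ yields
$$d_\Gamma\bigl(t(v_1), \alpha_2(g)(t(v_1))\bigr) \;\le\; d_\Gamma(t(v_1), v_2) + d_\Gamma(v_2, \alpha_2(g)(v_2)) + d_\Gamma\bigl(\alpha_2(g)(v_2), \alpha_2(g)(t(v_1))\bigr) \;=\; \ell_2(g) + 2K.$$
Hence $\ell_1(g) \le \ell_2(g) + 2K$ for all $g$. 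For $g \neq 1$ we have $\ell_2(g) \ge 1$, so $\ell_1(g)/\ell_2(g) \le 1 + 2K$, i.e.\ $\ell_1 \preccurlyeq \ell_2$. Reversing the roles of the two indices (using $\alpha_2(g) = t\alpha_1(g)t^{-1}$) gives $\ell_2 \preccurlyeq \ell_1$ by the same argument, completing the proof.

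I do not expect a genuine obstacle here; the proof is essentially a bookkeeping lemma that translates conjugation in $\Aut(\Gamma)$ into a bounded shift of basepoint. The only mildly subtle point is noticing that a single additive constant controls the comparison globally, which is then upgraded to a multiplicative bound on $G \setminus \{1\}$ using the trivial lower bound $\ell_i(g) \ge 1$.
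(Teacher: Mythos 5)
Your proof is correct and follows essentially the same route as the paper: identify $\ell_i(g)$ with the displacement $d_\Gamma(v_i,\alpha_i(g)v_i)$, use the conjugating element $t$ to move the basepoint, and absorb the additive error $2\,d_\Gamma(tv_1,v_2)$ via the triangle inequality. The only (harmless) difference is at the end: the paper first derives the multiplicative bound on generators $x\in X_2$ and then extends to all of $G$ by subadditivity of the word length, whereas you apply the additive bound to arbitrary $g$ and divide by $\ell_2(g)\ge 1$; both steps are valid.
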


\begin{proof}
For an automorphism $a\in \Aut(\Gamma)$ and a vertex $v$ of $\Gamma$, we denote by $av$ the image of $v$ under $a$. Let $v_i=\iota_i(1)$ for $i=1,2$. By definition, we have $\alpha_i(g) =\iota_i\circ g\circ \iota_i^{-1}$  (see (\ref{Eq:CD})). Therefore,
\begin{equation}\label{Eq:ai(g)}
\alpha_i(g)v_i = (\iota_i\circ g\circ \iota_i^{-1}) v_i = (\iota_i\circ g\circ \iota_i^{-1}) \iota_i(1)= \iota_i(g).
\end{equation}
We denote by $\d_{\ell_i}$ and $\d_\Gamma$ the left-invariant metric on $G$ corresponding to $\ell_i$ ($i=1,2$) and the metric on $\Gamma$, respectively. By (\ref{Eq:ai(g)}), we have
\begin{equation}\label{Eq:li(g)}
\ell_i(g) = \d_{\ell_i}(g, 1)= \d_\Gamma(\iota_i(g), \iota_i(1))= \d_\Gamma(\alpha_i(g)v_i, v_i)
\end{equation}

Let $t\in \Aut(\Gamma)$ be an element satisfying (\ref{Eq:conj}) and let $x\in X_2$. Combining (\ref{Eq:li(g)}) with (\ref{Eq:conj}) and the triangle inequality, we obtain
\[
\begin{split}
\ell_1(x) = & \d_\Gamma(\alpha_1(x)v_1, v_1)= \d_\Gamma(t^{-1}\alpha_2(x)tv_1, v_1) = \d_\Gamma(\alpha_2(x)tv_1, tv_1) \le \\
& \d_\Gamma(\alpha_2(x)tv_1, \alpha_2(x)v_2) + \d_\Gamma(\alpha_2(x)v_2, v_2) +\d_\Gamma(v_2, tv_1)=\\
&2\d_\Gamma (tv_1, v_2)+\ell_2(x) \le C \ell_2(x),
\end{split}
\]
where $C=2\d_\Gamma(tv_1, v_2)+ 1$. This obviously implies $\ell_1(g) \le C\ell_2(g)$ for all $g\in G$. Thus, $\ell_1\preccurlyeq \ell_2$. Similarly, we have $\ell_2\preccurlyeq \ell_1$.
\end{proof}

\begin{proof}[Proof of Corollary \ref{Cor: non-conjugate}]
Let $G$ be a countable MIF group. By Theorem \ref{main1}, there exists a comeager subset $C\subseteq L(G)$ such that every length function $\ell\in C$ is a word length and the corresponding Cayley graph is isomorphic to $U$. By Theorem \ref{main2}, $C$ contains $2^{\aleph_0}$ pairwise non-equivalent length functions. The regular representations $G\to Aut(U)$ corresponding to these functions are non-equivalent by  Lemma \ref{Lem:conj}.
\end{proof}

\section{Topological transitivity of the conjugacy action}\label{Sec:TT}

Our next goal is to address the question of whether the action of a group on the space of its length functions is topologically transitive. We begin by introducing auxiliary notation, which will be used throughout this section.

We write  $a^b=b^{-1}ab$ and $[a,b]=a^{-1}b^{-1}ab$ for elements $a$, $b$ of a group $G$. In this notation, we have the following obvious identities:
\begin{equation}\label{Eq:id}
(a^b)^c= a^{bc}, \;\;\; {\rm and}\;\;\; a^b=a[a,b]=[b,a^{-1}]a.
\end{equation}
For a subset $F\subseteq G$ and an element $g\in G$, we let
$$
F^g=\{ f^g\mid f\in F\}\;\;\; {\rm and}\;\;\; g^F=\{ g^f\mid f\in F\}.
$$

The \emph{conjugacy action} of a group $G$ on the set of all functions $\ell\colon G\to \NN\cup\{0\}$ is defined by the equation (\ref{Eq:CA}) for all $g\in G$. Clearly, $L(G)$ is a $G$-invariant subset of $(\NN\cup\{ 0\})^G$ and the action $G\curvearrowright L(G)$ is continuous. Given a subset $S\subseteq (\NN\cup\{ 0\})^G$, we denote by $g\circ S$ the set $\{ g\circ \ell \mid \ell \in S\}$.

\paragraph{4.1. The case of MIF groups.}
Recall that an action of a group $G$ on a topological space $X$ is \emph{topologically transitive} if for any non-empty open $U,V\subseteq X$, there exists $g\in G$ such that $gU\cap V \ne \emptyset$. We begin by proving the following.

\begin{thm}\label{Thm:tt}
For any countable MIF group $G$, the conjugacy action $G\curvearrowright  L(G)$ is topologically transitive.
\end{thm}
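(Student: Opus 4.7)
By unwinding the definitions, it suffices to show that given any $\ell_1, \ell_2 \in L(G)$ and any finite symmetric subsets $F_1, F_2 \subseteq G$ containing $1$, one can find $g \in G$ and $k \in L(G)$ with $k(f) = \ell_1(f)$ for every $f \in F_1$ and $k(g^{-1} f g) = \ell_2(f)$ for every $f \in F_2$. The plan is to mimic the proof of Lemma~\ref{Lem:D}: set $k = \ell_\omega$ for the symmetric weight function
\begin{equation*}
\omega(h) = \begin{cases} \ell_1(h), & h \in F_1 \setminus \{1\}, \\ \ell_2(f), & h = g^{-1} f g \text{ with } f \in F_2 \setminus \{1\}, \\ M, & \text{otherwise}, \end{cases}
\end{equation*}
where $M$ is chosen larger than every value of $\ell_1$ on $F_1$ and of $\ell_2$ on $F_2$, and obtain the element $g$ from Proposition~\ref{Prop:HO} so as to satisfy $w(g) \ne 1$ for every $w(x)$ in a suitable finite list $I \subseteq G \ast \langle x \rangle \setminus \{1\}$.

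The list $I$ will have two parts. The first consists of short words ensuring that the three cases above are mutually disjoint, so $\omega$ is well-defined --- for instance, words of the shape $x^{-1} f_2 x \cdot f_1^{-1}$ with $f_1 \in F_1$, $f_2 \in F_2 \setminus \{1\}$, and words enforcing $(g^{-1} f_2 g)^{\pm 1} \ne (g^{-1} f_2' g)^{\pm 1}$ unless $f_2 = (f_2')^{\pm 1}$. The second, essential, part encodes candidate shortcut decompositions: for each $\xi \in F_1$, all non-trivial words
\begin{equation*}
\xi^{-1} \cdot h_0 \cdot (x^{-1} \phi_1 x) \cdot h_1 \cdot (x^{-1} \phi_2 x) \cdot h_2 \cdots (x^{-1} \phi_k x) \cdot h_k,
\end{equation*}
with $k \ge 1$, each $h_i$ a word in $F_1$ of length at most $M$, and each $\phi_j$ a word in $F_2$ of length at most $M$; together with the analogous family $x^{-1} \xi^{-1} x \cdot h_0 (x^{-1} \phi_1 x) h_1 \cdots h_k$ indexed by $\xi \in F_2$. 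Since $\omega \ge 1$ off $\{1\}$, any $\omega$-decomposition of total weight below $M$ has fewer than $M$ factors, which bounds $k$, $|h_i|$, $|\phi_j|$ and makes $I$ finite.

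With $g$ selected, the verification is symmetric in $F_1$ and $F_2$; I will focus on $\ell_\omega(f_1) = \ell_1(f_1)$ for $f_1 \in F_1$. The upper bound is immediate. For the lower bound, fix a geodesic $\omega$-decomposition $f_1 = y_1 \cdots y_m$ of minimal length (the $M$-cutoff forces each $y_i \in F_1 \cup g^{-1} F_2 g$) and lift each $y_i$ to $z_i \in G \ast \langle x \rangle$ by $z_i = y_i$ if $y_i \in F_1$ and $z_i = x^{-1} f_2 x$ if $y_i = g^{-1} f_2 g$. Then $w(x) := f_1^{-1} z_1 \cdots z_m$ satisfies $w(g) = 1$. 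Grouping the $z_i$'s into maximal $F_1$-runs and maximal $K$-runs (where $K = g^{-1} F_2 g \setminus \{1\}$), and invoking the minimality of $m$ to exclude any sub-run with trivial $G$-product, the reduction of $w(x)$ in $G \ast \langle x \rangle$ falls into the second family of words catalogued in $I$, with $k$ equal to the number of $K$-runs. If $k = 0$, every $y_i \in F_1$ and $\sum \omega(y_i) = \sum \ell_1(y_i) \ge \ell_1(f_1)$ by the triangle inequality for $\ell_1$. If $k \ge 1$, each merged syllable $\phi_j \in \langle F_2 \rangle$ is non-trivial and is flanked by $x^{\pm 1}$ letters that cannot be eliminated in the free product, so $w(x) \ne 1$, contradicting our MIF choice of $g$. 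The main obstacle will be this last claim: justifying that after all cascading $x x^{-1}$-cancellations in the reduction of $w(x)$ at least one $x^{\pm 1}$ survives. This requires carefully choosing the decomposition to minimize both $m$ and the number of $K$-runs (as in Case~1 of Lemma~\ref{Lem:D}), so that every interior $F_1$-syllable $h_i$ and every $K$-syllable $\phi_j$ is a non-trivial element of $G$ and the syllable-length bounds that define $I$ are respected.
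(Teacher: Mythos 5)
Your proposal is correct and follows the same strategy as the paper's proof: glue a symmetric weight function out of $\ell_1$ on $F_1$ and the $g$-conjugate of $\ell_2$ on $g^{-1}F_2g$, with $g$ supplied by Proposition \ref{Prop:HO} so as to avoid a finite list of non-trivial words in $G\ast\langle x\rangle$. The only substantive difference is where the non-triviality of the relevant words gets established. You put into $I$ \emph{all non-trivial} words of a prescribed combinatorial shape, which forces you, during the verification, to prove that the specific word $w(x)$ produced by a putative shortcut decomposition is non-trivial --- the cancellation and minimality analysis you flag as the main obstacle. That analysis does close: with $m$ minimal no sub-block of the decomposition has trivial product, consecutive $x^{-1}\phi x$-blocks merge into a single block with non-trivial content, an $F_1$-factor contributes no occurrence of $x^{\pm 1}$ and so cannot trigger further cancellation, and hence at least one $x$ survives whenever a conjugated factor is genuinely used. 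The paper avoids this step entirely by defining $I_2$ and $I_3$ through a weight inequality (namely $\sum_{a_j\in F}\ell_0(a_j)+\sum_{a_j\in F^{x^{-1}}}\ell_1(a_j^{x})<\ell_0(f)$), including every word satisfying it regardless of apparent triviality, and then proving once and for all that no such word equals $1$: if all the $x$'s cancelled, the product would reduce to the product of the subsequence of factors lying in $F$, and the triangle inequality for $\ell_0$ would contradict the weight inequality. With that done, a shortcut decomposition immediately yields an element of $I_2$ or $I_3$ killed by $g$, and no minimality argument on the decomposition is needed. So your argument is sound, but the paper's bookkeeping renders your hardest step unnecessary.
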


\begin{proof}
Let $U,V$ be any nonempty open subsets of $L(G)$. By the definition of the topology on $L(G)$, there exist $\ell_0\in U$, $\ell_1\in V$, and a finite subset $F$ of $G\setminus\{1\}$ such that $F=F^{-1}$ and $W(\ell_0,F)\subseteq U$ and $W(\ell_1,F)\subseteq V$ (see (\ref{Eq:WlF})). Consider the finite subset
\[I=I_1\cup I_2\cup I_3\subseteq G\ast\la x \ra\cong G\ast\ZZ,\]
where
\[I_1=\left\{f^{-1}xg{x^{-1}}\mid  f,g\in F\right\},\]
$$
I_2=\left\{f^{-1}a_1\cdots a_n\; \left|\;  f\in F,\; a_1,\cdots, a_n\in F\cup F^{x^{-1}}, \; \sum_{a_j\in F}\ell_0(a_j)+\sum_{a_j\in F^{x^{-1}}}\ell_1\left(a_j^x\right)<\ell_0(f)\right.\right\}
$$
$$
I_3=\left\{f^{-1}a_1\cdots a_n \;\left|\; f\in F,\; a_1,\cdots, a_n\in F^x\cup F,\; \sum_{a_j\in F^x}\ell_0\left(a_j^{x^{-1}}\right)+\sum_{a_j\in F}\ell_1(a_j)<\ell_1(f)\right.\right\}.
$$

We claim that the set $I$ does not contain the identity element of $G\ast\la x \ra $. Indeed, this is obvious for $I_1$ since $1\notin F$. Suppose for contradiction that some product $f^{-1}a_1\cdots a_n\in I_2$ represents $1$ in $G*\la x\ra$. Then $f=a_1\cdots a_n$ and all occurrences of $x$ and $x^{-1}$ must cancel in the normal form of $a_1 \ldots a_n$. It follows that there is a subsequence $a_{i_1},\cdots,a_{i_m}$ of $a_1,\cdots,a_n$ such that $\{a_{i_1},\cdots,a_{i_m}\}\subset F$ and $f=a_{i_1}\cdots a_{i_m}$. This implies
\[\ell_0(f)=\ell_0(a_{i_1}\cdots a_{i_m})\le \ell_0(a_{i_1})+\cdots+\ell_0(a_{i_m})
\le \sum_{a_j\in F}\ell_0(a_j),\]
which contradicts the condition $\sum\limits_{a_j\in F}\ell_0(a_j)+\sum\limits_{a_j\in F^{x^{-1}}}\ell_1\left(a_j^x\right)<\ell_0(f)$ in the definition of $I_2$. Thus, $1\notin I_2$. Similarly, we prove that $1\notin I_3$.

As in the previous sections, we adopt the notation $w(x)$ for elements of $G\ast \la x\ra$ and denote by $w(g)$ the image of $w(x)$ under the homomorphism $G\ast \la x \ra\to G$ that is identical on $G$ and sends $x$ to $g$. By Proposition \ref{Prop:HO}, there exists $g\in G$ such that $w(g)\ne 1$ for all $w(x)\in I$ . In particular, we have $F^{g^{-1}}\cap F=\emptyset$ by the definition of $I_1$. Hence, we can define a symmetric weight function $\omega \colon G\to \NN\cup\{0\}$ as follows:
\[\omega(a)=
\begin{cases}
0 &if\; a=1 \\
\ell_0(a) &if\;a\in F \\
\ell_1\left(a^g\right) &if\; a\in F^{g^{-1}} \\
N+1 & if \; a\notin F\cup F^{g^{-1}},
\end{cases}
\]
where $N=\max\limits_{g\in F}\{\ell_0(g),\ell_1(g)\}$.

We claim that $\ell_\omega (a)=\ell_0(a)$ for all $a\in F$. Indeed, for any $a\in F$, we have $\ell_\omega(a)\le \omega(a)=\ell_0(a)$. Arguing by contradiction, suppose that $\ell_\omega(a)<\ell_0(a)$. Then, there exist $a_1,\cdots,a_n\in G\setminus\{1\}$ such that $a=a_1\cdots a_n$ and $\omega(a_1)+\cdots+\omega(a_n)=\ell_\omega(a)<\ell_0(a)$. Since $\ell_0(a)\le N$, we have $a_1,\cdots,a_n\in F\cup F^{g^{-1}}$. Hence,
\[\ell_0(a)>\omega(a_1)+\cdots+\omega(a_n)=\sum_{a_j\in F}\ell_0(a_j)+\sum_{a_j\in F^{g^{-1}}}\ell_1\left(a_j^g\right).\]
It follows that $w(g)=1$ for some $w(x)\in I_2$. This contradicts the choice of $g$.

Similarly, we show that $g^{-1}\circ \ell_\omega (a)=\ell_1(a)$ for any $a\in F$. Indeed, we have $g^{-1}\circ\ell_\omega=\ell_{g^{-1}\circ \omega}$ and the claim follows from the assumption that $g$ does not satisfy any equation $w(x)=1$, where $w(x)\in I_3$. Thus, we have $\ell_\omega\in W(\ell_0,F)\cap g\circ W (\ell_1,F)$. In particular, $U\cap g\circ V\neq\emptyset$.
\end{proof}

\paragraph{4.2. A necessary condition for topological transitivity of $G\curvearrowright L(G)$.}
Our next goal is to provide some non-trivial examples of infinite groups for which the conjugacy action on the space of length functions is not topologically transitive. To this end, we first show that if $G\curvearrowright L(G)$ is topologically transitive, then $G$ satisfies a certain algebraic condition (see Proposition \ref{Prop:non tt}).

Recall that a group $G$ is said to satisfy the \emph{infinite conjugacy classes} condition (abbreviated \emph{ICC}) if for every $g\in G\setminus\{ 1\}$, the conjugacy class of $g$ in $G$ is infinite.

\begin{lem}\label{Lem:cofinite}
Let $G$ be an infinite group. If $A\subseteq G$ and $|G\setminus A|<\infty$, then $G=\la A\ra$.
\end{lem}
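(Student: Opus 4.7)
The plan is to show that every element $g\in G$ can be written as a product of two elements of $A\cup A^{-1}$, from which $G=\langle A\rangle$ follows immediately. The main tool is that cofinite subsets of an infinite set enjoy strong intersection properties.

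More precisely, I would first observe that left multiplication by $g$ is a bijection $G\to G$ sending $G\setminus A$ to $G\setminus gA$. Hence $|G\setminus gA|=|G\setminus A|<\infty$, so $gA$ is also cofinite in $G$. Since the intersection of two cofinite subsets of a set is cofinite, $A\cap gA$ is cofinite in $G$. Because $G$ is infinite, this intersection is in particular non-empty.

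Fix any element $a\in A\cap gA$. By definition, we can write $a=gb$ for some $b\in A$, which gives $g=ab^{-1}\in\langle A\rangle$. As $g\in G$ was arbitrary, this yields $G=\langle A\rangle$.

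There is essentially no obstacle here; the only subtlety to keep in mind is that the argument genuinely uses the hypothesis that $G$ is infinite, since otherwise $A\cap gA$ being cofinite does not guarantee that it is non-empty. The proof is short enough that it could be written in two or three lines in the final manuscript.
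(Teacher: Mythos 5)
Your proof is correct, but it takes a slightly different route from the paper's. The paper argues by contradiction at the level of the subgroup $\langle A\rangle$: if some $g$ lay outside $\langle A\rangle$, the coset $g\langle A\rangle$ would be an infinite set disjoint from $\langle A\rangle\supseteq A$, contradicting the finiteness of $G\setminus A$. You instead work directly with translates of $A$ itself: since $A$ and $gA$ are both cofinite in the infinite group $G$, they intersect, and any $a\in A\cap gA$ gives $g=ab^{-1}$ with $a,b\in A$. The underlying mechanism is the same (a cofinite subset of an infinite group cannot miss an infinite set), but your version is constructive and actually proves the stronger statement $G=AA^{-1}$, i.e., every element is a product of just two elements of $A\cup A^{-1}$, whereas the paper's argument is marginally shorter and only yields generation. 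Either proof is adequate for the way the lemma is used (in Lemma 4.3, only $\langle X\rangle=G$ for $X=G\setminus x^G$ is needed), and your observation about where the infiniteness of $G$ enters is accurate in both arguments.
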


\begin{proof}
Suppose that there is $g\in G\setminus \la A\ra$. Then $g\la A\ra$ is an infinite set disjoint from $\la A\ra$, which contradicts the assumption that $A$ is cofinite in $G$.
\end{proof}

\begin{lem}\label{Lem:ICC}
Let $G$ be a countably infinite group. Suppose that $G$ contains an non-trivial element with finite conjugacy class. Then the conjugacy action $G\curvearrowright  L(G)$ is not topologically transitive.
\end{lem}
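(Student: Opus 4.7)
The plan is to exhibit two non-empty open subsets $U,V\subseteq L(G)$ with $g\circ U\cap V=\emptyset$ for every $g\in G$, directly contradicting topological transitivity. Let $a\in G\setminus\{1\}$ be an element with finite conjugacy class and set $C=a^G\cup (a^{-1})^G$, a finite, inverse- and conjugation-invariant subset of $G\setminus\{1\}$. The crucial observation is that, for any $\ell\in L(G)$ and any $g\in G$, the map $c\mapsto g^{-1}cg$ is a bijection of $C$, so the multiset $\{(g\circ\ell)(c)\mid c\in C\}$ coincides with the multiset $\{\ell(c)\mid c\in C\}$. Thus, if $\ell$ is constant on $C$ with value $v$, then so is $g\circ\ell$, with the same value $v$.

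Next I would exhibit length functions $\ell_1,\ell_2\in L(G)$ with $\ell_i(c)=i$ for every $c\in C$, using the weight-function formalism of Definition~\ref{WF}. Taking $\omega_1(g)=1$ for every $g\in G\setminus\{1\}$ yields a length function $\ell_{\omega_1}$ identically equal to $1$ on $G\setminus\{1\}$, so in particular $\ell_{\omega_1}|_C\equiv 1$. For $\ell_2$, put $\omega_2(c)=2$ for $c\in C$ and $\omega_2(g)=1$ for all other non-identity $g$. Then $\ell_{\omega_2}(c)\le \omega_2(c)=2$, and the reverse inequality (the one slightly subtle step) follows because every single-letter decomposition of $c$ uses an element of $C\cup C^{-1}=C$ and therefore has weight $2$, while any decomposition with more than one factor has total weight at least $2$, since every non-identity weight is $\ge 1$.

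Finally, set $U=W(\ell_1,C)$ and $V=W(\ell_2,C)$ in the sense of (\ref{Eq:WlF}); both are non-empty and open by construction. If some $g\in G$ and $\ell\in U$ satisfied $g\circ\ell\in V$, then for every $c\in C$ we would have
\[
2=\ell_2(c)=(g\circ\ell)(c)=\ell(g^{-1}cg),
\]
and since $c\mapsto g^{-1}cg$ permutes $C$, this would force $\ell(c)=2$ for all $c\in C$, contradicting $\ell\in U=W(\ell_1,C)$, which requires $\ell(c)=1$. Hence $g\circ U\cap V=\emptyset$ for every $g\in G$, and the conjugacy action is not topologically transitive. The only step I expect to require careful verification is the lower bound $\ell_{\omega_2}\ge 2$ on $C$, which is the sole place where one must rule out a shortcut coming from the definition of $\omega_2$; everything else is an immediate consequence of the permutation-of-$C$ observation.
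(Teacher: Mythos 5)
Your proof is correct and takes essentially the same route as the paper: both arguments rest on the observation that conjugation permutes the finite conjugacy class, so the non-empty open sets of length functions that are constantly $1$, respectively constantly $2$ (in the paper: not constantly $1$), on that class are $G$-invariant and disjoint. The only real difference is how the witness for the second set is produced --- the paper uses the word length with respect to the cofinite generating set $G\setminus x^G$ via Lemma~\ref{Lem:cofinite}, whereas you use the weight-function construction of Definition~\ref{WF}; your verification that $\ell_{\omega_2}\equiv 2$ on $C$ is sound (any decomposition of $c\in C$ either contains a factor from $C$, of weight $2$, or at least two non-identity factors of weight $\ge 1$ each).
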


\begin{proof}
Let $x$ be an element of $G\setminus\{1\}$ such that the conjugacy class $x^G$ is finite, $X=G\setminus x^G$, $Y=G$. By Lemma \ref{Lem:cofinite}, $X$ is a generating set of $G$. Let $\ell_X$ and $\ell_Y$ be the word lengths with respect to $X$ and $Y$, respectively. The sets
\[U=\{\ell\in L(G)\mid \ell(g)=1 \; \forall g\in x^G \}\;\;\; {\rm and }\;\;\;  V=L(G)\setminus U\]
are $G$-invariant and clopen in $L(G)$ because $x^G$ is conjugacy-invariant and finite. Clearly, we have $\ell_X\in V$ and $\ell_Y\in U$ and, therefore, $U$ and $V$ are nonempty. However, we have $g\circ U\cap V=U\cap V=\emptyset$ for all $g\in G$.
\end{proof}

\begin{lem}\label{Lem:Neumann}
Let $G$ be an ICC group. For any finite subsets $A$, $B$ of $G\setminus \{1\}$, there exists $g \in G$ such that $A^g \cap B = \emptyset$.
\end{lem}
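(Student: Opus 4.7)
The plan is to reduce the lemma to B.\,H. Neumann's classical result that a group cannot be covered by finitely many cosets of subgroups of infinite index. For each pair $(a,b)\in A\times B$, consider the ``bad'' set
$$
S_{a,b}=\{g\in G\mid a^g=b\}.
$$
The condition $A^g\cap B=\emptyset$ is precisely $g\notin \bigcup_{(a,b)\in A\times B}S_{a,b}$, so the task is to show that this finite union does not exhaust $G$.

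First I would observe that if $S_{a,b}\neq\emptyset$, pick any $g_0\in S_{a,b}$; then a standard computation shows $S_{a,b}=g_0\,C_G(a)$, a single left coset of the centralizer $C_G(a)$. Next, since $G$ is ICC and $a\in A\subseteq G\setminus\{1\}$, the conjugacy class $a^G$ is infinite, so $[G:C_G(a)]=|a^G|=\infty$. Thus each nonempty $S_{a,b}$ is a coset of an infinite-index subgroup, and there are only finitely many of them because $A\times B$ is finite.

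Finally I would invoke B.\,H. Neumann's covering lemma: if a group is the union of finitely many cosets $g_1H_1,\ldots,g_nH_n$, then at least one $H_i$ has finite index. Taking the contrapositive with $H_i=C_G(a_i)$, we conclude $\bigcup_{(a,b)\in A\times B}S_{a,b}\subsetneq G$, so any $g$ in the complement works.

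There is essentially no obstacle; the only point requiring a little care is the observation that $S_{a,b}$, when nonempty, is exactly a single coset of $C_G(a)$ (rather than a union of several cosets), which follows immediately from the transitivity of the conjugation action of $G$ on $a^G$.
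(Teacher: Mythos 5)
Your proposal is correct and follows essentially the same route as the paper: cover the ``bad'' set by the sets $\{g\mid a^g=b\}$, note that each nonempty one is a single coset of $C_G(a)$, which has infinite index by ICC, and invoke B.~H.~Neumann's covering lemma. One cosmetic slip: with the convention $a^g=g^{-1}ag$, the set $S_{a,b}$ is the \emph{right} coset $C_G(a)g_0$ rather than $g_0C_G(a)$ (if $g=g_0c$ then $a^g=b^c$, not $b$), but this does not affect the argument since Neumann's lemma applies to cosets on either side.
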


\begin{proof}
Given $a\in A$ and $b\in B$, we let $F_{a,b}=\{g\in G\mid  a^g=b\}$. Fix an arbitrary element of $f_{a,b}\in F_{a,b}$ if $F_{a,b}\ne \emptyset$. Obviously, we have $F_{a,b}=C_G(a)f_{a,b}$ where $C_G(a)$ is the centralizer of $a$ in $G$. Arguing by contradiction, suppose that $A^g \cap B \ne \emptyset$ for all $g\in G$; then $$G=\bigcup\limits_{a\in A, \; b\in B}F_{a,b} = \bigcup C_G(a)f_{a,b},$$ where the second union is taken over all $a\in A$, $b\in B$ such that $F_{a,b}\ne \emptyset$.

By Neumann's lemma (see \cite{Neu}), a group can be covered by a finite number of cosets of some subgroups only if one of the subgroups is of finite index. Therefore, there exists $a_0\in A$ such that $[G:C_G(a_0)]<\infty$. This implies $\left|a_0^G\right|=[G:C_G(a_0)]<\infty$. However, we have $a_0\neq 1$ since $1\notin A$, which contradicts the assumption that $G$ is ICC.
\end{proof}

\begin{prop}\label{Prop:non tt}
Let $G$ be a countably infinite group. Suppose that the conjugacy action $G\curvearrowright  L(G)$ is topologically transitive. Then for every finite index normal subgroup $H\le G$, the commutator subgroup $[H,H]$ is nontrivial ICC.
\end{prop}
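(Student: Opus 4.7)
The plan is to split the proof into three claims and treat each in turn. First, by Lemma~\ref{Lem:ICC}, topological transitivity immediately forces $G$ itself to be ICC, since any nontrivial element with finite conjugacy class would directly obstruct transitivity. Second, to establish that $[H,H]$ is nontrivial, I argue by contradiction: if $H$ were abelian, then for any nontrivial $h \in H$, normality of $H$ forces $h^G \subseteq H$, and abelianness of $H$ forces $H \le C_G(h)$, so $|h^G| \le [G:H] < \infty$, contradicting the ICC property of $G$ just established.

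The ICC-ness of $[H,H]$ is the substantive claim. Arguing by contradiction, assume $c \in [H,H] \setminus \{1\}$ has a finite $[H,H]$-conjugacy class $S = c^{[H,H]} = \{c_1, \ldots, c_n\}$, and aim to produce two disjoint nonempty $G$-invariant open subsets of $L(G)$. The relevant structural facts are that, by normality of $[H,H]$ in $G$, the $G$-action by conjugation permutes the $[H,H]$-conjugacy classes contained in the (infinite, by ICC) set $c^G$; any two such classes $S$ and $g^{-1}Sg$ are either equal (when $g \in N_G(S)$) or disjoint; and $[G : N_G(S)] = |c^G|/n = \infty$.

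The main obstacle is that the strategy of Lemma~\ref{Lem:ICC} does not transport verbatim: the clopen subset $\{\ell \in L(G) : \ell \equiv 1 \text{ on } S\}$ is $N_G(S)$-invariant but not $G$-invariant, while its $G$-saturate $\{\ell : \ell \equiv 1 \text{ on some } [H,H]\text{-orbit in } c^G\}$ is open and $G$-invariant but appears to be dense in $L(G)$, since any length function can be perturbed by a weight function as in Lemma~\ref{Lem:wM} to acquire value $1$ on some $[H,H]$-orbit in $c^G$ disjoint from a given finite set. I would therefore aim to build a finer $G$-invariant clopen condition by tying the values of $\ell$ on $S$ to values on additional test elements drawn from $G$-invariant subgroup structures associated to $c$ --- for instance the FC-center of $[H,H]$, which is characteristic in $[H,H]$, hence normal in $G$, and infinite since ICC of $G$ forbids nontrivial finite normal subgroups. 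The weight-function constructions of Section~\ref{Sec:Prelim} together with Neumann's lemma (Lemma~\ref{Lem:Neumann}) applied to the ICC group $G$ should then enable the construction of length functions $\ell_0,\ell_1$ and open neighbourhoods witnessing the failure of topological transitivity.
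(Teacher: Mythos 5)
The first two of your three claims are fine: deducing that $G$ is ICC from Lemma~\ref{Lem:ICC} is exactly the paper's first step, and your argument that an abelian normal finite-index $H$ would force $|h^G|\le [G:H]<\infty$ is a correct (and slightly more self-contained) variant of the paper's observation that ICC passes to finite-index subgroups. The problem is the third claim, which is the entire substance of the proposition: your proposal stops at precisely the point where the work begins. You correctly identify that the naive clopen set $\{\ell : \ell\equiv 1 \text{ on } S\}$ is only $N_G(S)$-invariant and that its $G$-saturate is (plausibly) dense, but the proposed fix --- ``a finer $G$-invariant clopen condition'' built from the FC-center of $[H,H]$ --- is never constructed, and the closing sentence (``should then enable the construction'') is a statement of hope rather than a proof. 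As written, there is no argument that $[H,H]$ is ICC.

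It is worth noting that the paper does not attempt to exhibit $G$-invariant open sets at all; that route runs directly into the density obstruction you noticed. Instead, assuming $a\in[H,H]\setminus\{1\}$ has finite $[H,H]$-class, it forms the finite set $A=\bigcup_{e\in E}(eae^{-1})^{[H,H]}$ over a transversal $E$ of $H$, fixes a \emph{proper} length function $\ell_0$ (Corollary~\ref{Cor:BUdense}) and the constant function $\ell_1\equiv 1$, and uses Lemma~\ref{Lem:Neumann} to choose $h\in H$ with $A^h\cap B_{\ell_0}(r)=\emptyset$ for $r=2\max_{e\in E}\ell_0(e)+\ell_0(a)+2$. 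Applying topological transitivity \emph{positively} to the neighborhoods $W(\ell_0,F)$ and $W(\ell_1,F)$ with $F=E\cup A\cup A^h\cup\{h\}$ yields $g$ and $\ell$; writing $g=kf$ with $f\in E$, the element $b=(a^{h^{g}})^{f^{-1}}$ lies in $A^h$ yet satisfies $\ell_0(b)\le 2\ell_0(f)+2\ell(h^g)+\ell_0(a)\le r$ because $\ell(h^g)=(g\circ\ell)(h)=\ell_1(h)=1$ --- a contradiction. The finiteness of the transversal is what makes the finite set $F$ and the radius $r$ uniform over the unknown $g$; this is the key idea your proposal is missing, and without it (or a genuine substitute) the proof is incomplete.
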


\begin{proof}
Since the action $G \curvearrowright \mathcal{L}(G)$ is topologically transitive, $G$ is ICC by Lemma \ref{Lem:ICC}.
It is well-known and easy to prove that the ICC property passes to finite index subgroups. Thus, $H$ is  ICC  and, in particular, $[H,H]$ is nontrivial.

Suppose for contradiction that there exists $a \in [H,H]\setminus \{1\}$ such that the conjugacy class $a^{[H,H]}$ is finite. Let $E$ denote a transversal of $H$ in $G$; that is, $G=\bigsqcup_{e\in E} He$. The subgroup $[H,H]$ is normal in $G$ being a characteristic subgroup of the normal subgroup $H$. It follows that $(eae^{-1})^{[H,H]} = e(a^{[H,H]})e^{-1}$ for all $e\in E$. Thus, the union
\[ A= \bigcup_{e\in E} (eae^{-1})^{[H,H]} \]
is a finite subset of $H\setminus \{1\}$.

By Corollary \ref{Cor:BUdense}, there exists a proper length function $\ell_0\in L(G)$. In particular, the ball
$$
B_{\ell_0}(n)=\{ h\in H\mid \ell_0(h)\le n\}
$$
is finite for every $n\in \NN$. Let
\[ r= 2 \cdot max\{ \ell_0(e) \mid e\in E\} + \ell_0(a) + 2 \]
By Lemma \ref{Lem:Neumann}, there exists $h \in H$ such that
\begin{equation}\label{Eq:disj}
A^{h} \cap B_{\ell_0} (r) = \emptyset.
\end{equation}
We are going to use topological transitivity of the conjugacy action $G\curvearrowright L(G)$ to find an element $b\in A^{h} \cap B_{\ell_0} (r)$, thus reaching a contradiction.

Let
\[F = E \cup A \cup A^{h} \cup \{h\}\]
and let $\ell_1$ denote the length function on $G$ such that $\ell_1(g)=1$ for all $g\in G\setminus\{1\}$. By topological transitivity of the conjugacy action $G \curvearrowright L(G)$, there exist $g\in G$ and $\ell\in L(G)$ such that $\ell\in W(\ell_0, F)$ and $g\circ\ell \in W(\ell_1, F)$. Let $g=kf$, where $f\in E$ and $k\in H$, and let $$b=\left(a^{f^{-1}}\right)^{[k,h^{-1}]h}.$$ Using identities (\ref{Eq:id}), we obtain
$$
b^f=\left(a^{f^{-1}}\right)^{[k,h^{-1}]hf}=\left(a^{f^{-1}}\right)^{h^kf}=a^{f^{-1}h^kf}=a^{h^{kf}}=a^{h^g}.
$$
Since $\ell\in W(\ell_0, F)$, the restrictions of $\ell$ and $\ell_0$ to $F$ coincide. Further, the inclusions $g\circ\ell \in W(\ell_1, F)$ and $h\in F$ imply that $\ell(h^g)=(g\circ \ell) (h)= \ell_1(h)\le 1$. Note also that $b\in (a^{f^{-1}})^{[H,H]h}\subseteq A^h\subseteq F$ and $a,f\in F$. Therefore, we have
$$
\ell_0(b) = \ell(b) = \ell \left(\left(a^{h^g}\right)^{f^{-1}}\right)  \le 2\ell(f) + 2\ell(h^g) + \ell(a) \\
\le 2\ell_0(f) + 2 + \ell_0(a) \le r
$$
Therefore, $b\in A^h \cap B_{\ell_0} (r)$, which contradicts (\ref{Eq:disj}). Thus, $[H,H]$ is ICC.
\end{proof}

\begin{cor}\label{Cor:Sol}
For any countable group $G$ from the following classes, the conjugacy action $G\curvearrowright L(G)$ is not topologically transitive.
\begin{enumerate}
\item[(a)] Linear solvable groups over arbitrary fields.
\item[(b)] Nilpotent-by-abelian (in particular, metabelian) groups.
\end{enumerate}
\end{cor}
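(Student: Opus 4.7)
The plan is to apply Proposition \ref{Prop:non tt} contrapositively: for each $G$ in the listed classes, I need to exhibit a finite index normal subgroup $H\trianglelefteq G$ such that $[H,H]$ fails to be nontrivial ICC. Since any nontrivial nilpotent group has nontrivial center, in which case central elements have singleton conjugacy classes, the task reduces to arranging $[H,H]$ to be nilpotent for some such $H$.

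For part (b), I would simply take $H=G$. If $N\trianglelefteq G$ is a normal nilpotent subgroup with $G/N$ abelian, then $[G,G]\le N$, so $[G,G]$ is nilpotent. If $[G,G]=\{1\}$, the ``nontrivial'' clause of Proposition \ref{Prop:non tt} already fails; otherwise any $z\in Z([G,G])\setminus\{1\}$ has singleton conjugacy class in $[G,G]$, so $[G,G]$ is not ICC. Either way, Proposition \ref{Prop:non tt} forces $G\curvearrowright L(G)$ to be non-topologically-transitive.

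For part (a), the main input is the Mal'cev--Kolchin theorem: every solvable subgroup of $GL_n(\bar k)$ contains a subgroup of finite index (bounded by a function of $n$ alone) that is conjugate in $GL_n(\bar k)$ to a subgroup of upper triangular matrices. Given a countable linear solvable $G\le GL_n(k)$, I would embed $G$ into $GL_n(\bar k)$ and extract a finite index subgroup $H_0\le G$ whose image is triangularizable. Upper triangular matrices form a nilpotent-by-abelian group, with the unitriangular subgroup nilpotent and the diagonal quotient abelian, so $H_0$ is nilpotent-by-abelian. Replacing $H_0$ by its normal core in $G$ yields a finite index normal subgroup $H\trianglelefteq G$ that is still nilpotent-by-abelian as a subgroup of $H_0$. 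The argument of part (b) applied to $H$ then completes the proof.

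The main obstacle is essentially bookkeeping: one must invoke Mal'cev--Kolchin for solvable subgroups of $GL_n$ over arbitrary fields (so that finite generation is not required and one works in the algebraic closure to extract the triangularizable finite index subgroup) and verify that passing to the normal core preserves the nilpotent-by-abelian property. Both are standard structural facts, so no serious technical difficulty is expected beyond recalling Mal'cev--Kolchin in sufficient generality.
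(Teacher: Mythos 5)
Your proof is correct and follows essentially the same route as the paper: both invoke the Mal'cev/Lie--Kolchin theorem to produce a finite index normal subgroup $H\trianglelefteq G$ with $[H,H]$ nilpotent (hence not nontrivial ICC, since a nontrivial nilpotent group has nontrivial center), and then apply Proposition \ref{Prop:non tt} contrapositively, with $H=G$ in case (b). The one detail you omit is that Proposition \ref{Prop:non tt} is stated for countably \emph{infinite} groups, so the finite groups in these classes must be disposed of separately (as the paper does in its opening line); this is a one-line fix.
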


\begin{proof}
For any finite group, the action on the space of length functions is not topologically transitive, hence it suffices to prove the corollary for infinite groups. Suppose that $G$ is solvable and linear over a field. By the Lie-Kolchin theorem, $G$ contains a finite index normal subgroup $H$ such that $[H,H]$ is nilpotent. In particular, $[H,H]$ does not belong to the class of nontrivial ICC groups and we obtain (a) by Proposition \ref{Prop:non tt}. The proof of (b) is analogous.
\end{proof}

In light of Corollary \ref{Cor:Sol}, we would like to propose the following.

\begin{prob}
Does there exist a non-trivial, countable, solvable group $G$ such that the conjugacy action $G\curvearrowright L(G)$ is topologically transitive?
\end{prob}

\section{A topological zero-one law for length functions on MIF groups}\label{Sec:EE}

\paragraph{5.1. Infinitary logic and the Lopez-Escobar theorem for $L(G)$.}
Recall that $\L$ is the first order language in the signature (\ref{Eq:sig}). As usual, we denote by $\Lo$ the infinitary version of $\L$ that allows countable conjunctions and disjunctions (but only finite sequences of quantifiers). For details, we refer to \cite{Mar}.

The set of well-formed $\Lo$-formulas can be stratified as follows. Let $\mathcal{L}_0$ denote the set of all atomic formulas. If $\alpha>0$ is a limit ordinal, define $\mathcal{L}_\alpha=\bigcup_{\beta<\alpha}\mathcal{L}_\beta$. If $\alpha=\beta+1$ for some ordinal $\beta$, let $\mathcal{L}_\alpha$ be the set of all well-formed formulas of the following three types.
\begin{enumerate}
\item[(a)] $\forall\, x\; \phi$, $\exists\, x\;\phi$, $\neg\phi$, where $\phi \in \L_\beta$.
\item[(b)] $\bigwedge_{n\in \NN}\phi_n$, $\bigvee_{n\in \NN}\phi_n$, where $\phi_n\in\mathcal{L}_\beta$ for all $n\in \NN$.
\end{enumerate}
Thus, we have $\Lo=\bigcup_{\alpha<\omega_1}\mathcal{L}_\alpha$ where $\omega_1$ is the first uncountable ordinal. As usual, a well-formed formula is a called a \emph{sentence} if it does not contain free variables.

The well-known Lopez-Escobar theorem \cite{LE} states that every $\Lo$-sentence defines a Borel subset in the space of countable structures of any countable signature. We will need a version of this theorem for the space of length functions. More precisely, we fix a group $G$ and for every sentence $\sigma\in \Lo$ define its \emph{set of models}
$$
\Mod(\sigma)=\{\ell\in L(G)\mid G_\ell \models \sigma\},
$$
where $G_\ell$ is the $\L$-structure associated to $\ell$ as explained in the introduction.

\begin{lem}\label{Lem:LE}
For any countable ordinal $\alpha $, any formula $\sigma=\sigma(x_1,\cdots,x_n)\in \mathcal{L}_\alpha$  with finitely many free variables $x_1,\cdots,x_n$, and any $\bar{a}=(a_1,\cdots,a_n)\in G^n$, the set
\[\Mod(\sigma,\bar{a})= \{\ell\in L(G)\mid G_\ell\models \sigma(a_1,\cdots,a_n)\}\]
is Borel in $L(G)$.
\end{lem}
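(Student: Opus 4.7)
The plan is to prove the lemma by transfinite induction on the ordinal $\alpha < \omega_1$. The collection of Borel subsets of $L(G)$ is a $\sigma$-algebra closed under countable Boolean operations, so the induction step will be formal once the base case is handled. The only non-routine input is that $G$ is countable, which turns the quantifier step into a countable (rather than arbitrary) union or intersection.

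For the base case $\alpha = 0$, I consider an atomic formula $\sigma(x_1,\ldots,x_n)$. In the signature \eqref{Eq:sig}, every $\L$-term built from $x_1,\ldots,x_n$ is a word in $\cdot$ applied to the variables, so once the tuple $\bar a = (a_1,\ldots,a_n)\in G^n$ is substituted, every such term evaluates to a fixed element of $G$ that does not depend on $\ell$. Hence an atomic formula of the form $t_1(\bar a) = t_2(\bar a)$ defines either all of $L(G)$ or the empty set, both Borel. An atomic formula of the form $L_i(t(\bar a))$ defines the set $\{\ell \in L(G) \mid \ell(t(\bar a)) = i\}$, which is clopen in the topology of pointwise convergence by \eqref{Eq:WlF}.

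For the successor step $\alpha = \beta + 1$, I argue case by case using the inductive hypothesis that $\Mod(\phi,\bar c)$ is Borel for every $\phi\in \mathcal{L}_\beta$ and every tuple $\bar c$ of group elements matching the free variables of $\phi$. Negation gives $\Mod(\neg\phi,\bar a) = L(G) \setminus \Mod(\phi,\bar a)$, which is Borel since the class of Borel sets is closed under complements. Countable conjunction and disjunction give
\[
\Mod\!\left(\bigwedge_{n\in\NN} \phi_n,\bar a\right) = \bigcap_{n\in\NN}\Mod(\phi_n,\bar a), \qquad \Mod\!\left(\bigvee_{n\in\NN}\phi_n,\bar a\right) = \bigcup_{n\in\NN}\Mod(\phi_n,\bar a),
\]
both Borel by countable closure. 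For the quantifiers, I use countability of $G$ to write
\[
\Mod(\exists x\,\phi(x,\bar a),\bar a) = \bigcup_{b\in G}\Mod(\phi,(b,\bar a)), \qquad \Mod(\forall x\,\phi(x,\bar a),\bar a) = \bigcap_{b\in G}\Mod(\phi,(b,\bar a)),
\]
so each is a countable union or intersection of sets that are Borel by the inductive hypothesis. The limit case is immediate: if $\alpha$ is a countable limit ordinal, then $\sigma\in\mathcal{L}_\alpha$ means $\sigma\in\mathcal{L}_\beta$ for some $\beta<\alpha$, and we apply the hypothesis at stage $\beta$.

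The only conceptual subtlety, and hence the ``main obstacle,'' is ensuring that one can use countability of $G$ uniformly. This is indeed guaranteed by the standing assumption that $G$ is a countable group, which ensures that existential and universal quantifiers over elements of $G$ produce countable Boolean combinations rather than uncountable ones. With that in place, the induction goes through routinely and gives the result for every $\alpha<\omega_1$, and in particular for every $\Lo$-formula $\sigma$.
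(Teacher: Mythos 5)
Your proof is correct and follows essentially the same route as the paper: transfinite induction on $\alpha$, with the base case split into predicate-free atomic formulas (giving $L(G)$ or $\emptyset$) and formulas $L_i(t(\bar a))$ (giving clopen sets), and the successor step handled by complements, countable Boolean combinations, and countable unions/intersections over the countable group $G$ for the quantifiers. The only cosmetic difference is that the paper's conjunction/disjunction case tracks which subset of the free variables each conjunct uses, a bookkeeping detail your argument subsumes.
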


\begin{proof}
We proceed by transfinite induction on $\alpha$. If $\alpha=0$, $\sigma$ is an atomic formula. If it does not contain any predicate symbols, then $\sigma(a_1,\cdots,a_n)$ is a formula in the language of groups and $\Mod(\sigma,\bar{a})$ is either equal to $L(G)$ or empty depending on whether $G$ satisfies $\sigma(a_1, \ldots, a_n)$ or not. In particular, $\Mod(\sigma,\bar{a})$ is Borel. If $\sigma=L_i(t(x_1,\cdots,x_n))$ for some $i\in \NN\cup\{0\}$, where $t(x_1,\cdots,x_n)$ is a term, then for any $\bar{a}\in A^n$, $\Mod(\sigma,\bar{a})$ coincides with the set $\{\ell\in L(G) \mid\ell(t(a_1,\cdots,a_n))=i \}$, which is clopen in $L(G)$.

Further, assume that $\alpha>0$ and that the lemma holds for any $\beta<\alpha$. We fix some $\bar a\in G^n$. If $\alpha$ is a limit ordinal, the claim follows immediately from the inductive assumption. If $\alpha=\beta+1$, there are three cases to consider.

\noindent{\it Case 1.} First assume that $\sigma=\forall\, x\;\tau(x,x_1,\cdots,x_n)$ (or $\sigma= \exists\, x\; \tau(x,x_1,\cdots,x_n)$), where $\tau \in \mathcal{L}_\beta$. Then  $\Mod(\sigma,\bar{a})$ equals $\bigcap_{g\in G} \Mod(\tau,(g,\bar{a}))$ (respectively, $\bigcup_{g\in G} \Mod(\tau,(g,\bar{a}))$. Since $G$ is countable $\Mod(\tau,(b,\bar{a}))$ is Borel for any $g\in G$ by the inductive assumption, $\Mod(\sigma,\bar{a})$ is also Borel.

\noindent{\it Case 2.} Next, suppose that $\sigma=\neg \tau(x_1,\cdots,x_n)$, where $\tau \in \mathcal{L}_\beta$.  Then $\Mod(\sigma,\bar{a})$ coincides with $L(G)\setminus \Mod(\tau,\bar{a})$ and the latter set is Borel by the inductive assumption.

\noindent{\it Case 3.} Finally, let $\sigma=\bigwedge_{i\in \NN} \tau_i \left(x_{j_{(i,1)}},\cdots,x_{j_{(i,n_i)}}\right)$ or
$\sigma=\bigvee_{i\in \NN} \tau_i \left(x_{j_{(i,1)}},\cdots,x_{j_{(i,n_i)}}\right)$, where $\tau_i \in \mathcal{L}_\beta$ and $\left\{x_{j_{(i,1)}},\cdots,x_{j_{(i,n_i)}}\right\} \subset \{x_1,\cdots x_n\}$ for all $i$. In these cases, we have 
$$
\Mod(\sigma,\bar{a})=\bigcap_{i\in \NN}  \Mod\left(\tau_i,\left(a_{j_{(i,1)}},\cdots,a_{j_{(i,n_i)}}\right)\right)
$$ 
or 
$$
\Mod(\sigma,\bar{a})=\bigcup_{i\in \NN} \Mod\left(\tau_i,\left(a_{j_{(i,1)}},\cdots,a_{j_{(i,n_i)}}\right)\right),
$$
and the claim follows as in Case 1.
\end{proof}

\begin{prop}\label{Prop:LE}
Let $G$ be a countable group. For any sentence $\sigma \in \Lo$, $\Mod(\sigma)$ is a $G$-invariant Borel subset of $L(G)$.
\end{prop}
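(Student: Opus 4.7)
The plan is to dispatch the two conclusions separately; both are essentially free given the work already done.

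For the Borel claim, note that a sentence by definition has no free variables, so taking $n=0$ in Lemma \ref{Lem:LE} immediately yields that $\Mod(\sigma)$ is Borel in $L(G)$. There is no further work needed here — the whole technical content (induction on formula complexity, handling $L_i$, countable conjunctions, and quantifiers via the countability of $G$) is already absorbed into that lemma.

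For $G$-invariance, I would fix $\ell \in L(G)$ and $g \in G$, and consider the conjugation map $\phi_g \colon G \to G$ defined by $\phi_g(h) = g^{-1}hg$. The key observation is that $\phi_g$ is an $\L$-isomorphism from $G_{g\circ \ell}$ onto $G_\ell$. Indeed, $\phi_g$ is a group automorphism, and for every $i \in \NN \cup \{0\}$ and $h \in G$ we have
\[
G_{g \circ \ell} \models L_i(h) \iff (g\circ \ell)(h) = i \iff \ell(g^{-1}hg) = i \iff G_\ell \models L_i(\phi_g(h)),
\]
where the second equivalence is the definition (\ref{Eq:CA}) of the conjugacy action. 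Thus $\phi_g$ respects the multiplication and every predicate $L_i$ in the signature (\ref{Eq:sig}), so it is a bona fide isomorphism of $\L$-structures.

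It remains to invoke the standard fact that $\Lo$-sentences are preserved under isomorphisms of $\L$-structures; this is proved by transfinite induction along the same stratification $\mathcal{L}_0 \subseteq \mathcal{L}_1 \subseteq \ldots$ used in the proof of Lemma \ref{Lem:LE}, with the countable conjunction/disjunction step being a trivial extension of the finitary case. Consequently $G_{g\circ \ell} \models \sigma$ if and only if $G_\ell \models \sigma$, i.e., $g \circ \ell \in \Mod(\sigma)$ if and only if $\ell \in \Mod(\sigma)$, which is precisely the $G$-invariance of $\Mod(\sigma)$. There is no real obstacle in this proof; the only point that requires care is confirming that the conjugation map intertwines the two $\L$-structures in the correct direction, which is just a matter of reading off (\ref{Eq:CA}).
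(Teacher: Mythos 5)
Your proof is correct and follows the same route as the paper: the Borel part is quoted directly from Lemma \ref{Lem:LE}, and the $G$-invariance comes from observing that $G_{g\circ\ell}$ and $G_\ell$ are isomorphic $\L$-structures (the paper states this as ``easy to see''; you correctly identify the conjugation map $h\mapsto g^{-1}hg$ as the isomorphism and verify the predicate condition in the right direction).
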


\begin{proof}
Fix any $\sigma \in \Lo$. It is easy to see that for every $\ell\in L(G)$ and $g\in G$, the $\L$-structures $G_\ell$ and $G_{g\circ \ell}$ are isomorphic. This implies that $\Mod(\sigma)$ is $G$-invariant for any sentence $\sigma \in \Lo$. That $\Mod(\sigma)$ is Borel follows immediately from Lemma \ref{Lem:LE}.
\end{proof}

\paragraph{5.2. A zero-one law and elementary equivalence of length functions.}
We will need the following standard fact.

\begin{thm}[ {\cite[Theorem 8.46]{Kec}}]
Suppose that a group $G$ acts topologically transitively by homeomorphisms on a Polish space $P$. Then every $G$-invariant Borel subset of $P$ is either meager or comeager.
\end{thm}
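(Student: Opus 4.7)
The plan is to argue by contradiction via the Baire property. Let $B \subseteq P$ be a $G$-invariant Borel subset and suppose, toward a contradiction, that $B$ is neither meager nor comeager. Since Borel sets in a Polish space have the Baire property (a standard consequence of the fact that sets with the Baire property form a $\sigma$-algebra containing the open sets), there exist open sets $U, V \subseteq P$ and meager sets $M, M' \subseteq P$ such that $B = U \triangle M$ and $P \setminus B = V \triangle M'$. Because $B$ is not meager, $U$ is non-empty; because $P \setminus B$ is not meager (i.e.\ $B$ is not comeager), $V$ is non-empty.

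The key step is to exploit topological transitivity: since $U$ and $V$ are both non-empty open subsets of $P$, there exists $g \in G$ with $gU \cap V \neq \emptyset$. I will then show that $gU \cap V$ must be meager, contradicting the Baire category theorem (which guarantees that a non-empty open subset of a Polish space is non-meager). To see this, first note that $U \subseteq B \cup M$ and $V \subseteq (P \setminus B) \cup M'$, so $U \cap V \subseteq M \cup M'$ is meager. Next, using that $B$ is $G$-invariant and that the homeomorphism $g \colon P \to P$ sends meager sets to meager sets, the identity $gB = B$ combined with $B = U \triangle M$ yields $U \triangle gU \subseteq M \cup gM$, which is meager. Hence $gU \cap V \subseteq (U \cap V) \cup (U \triangle gU)$ is meager, the desired contradiction.

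The only subtle point — and the step I would double-check carefully — is the verification that meagerness is preserved under the homeomorphisms induced by the action: this is immediate because homeomorphisms send nowhere dense sets to nowhere dense sets and commute with countable unions. Apart from this, the argument is entirely formal once the Baire property of Borel sets is invoked, so no further technical ingredients are needed beyond what is already in the excerpt.
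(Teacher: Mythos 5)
Your proof is correct. The paper does not prove this statement at all---it is quoted directly from Kechris \cite[Theorem 8.46]{Kec}---and your argument is a complete and accurate rendering of the standard proof of the topological zero--one law: the Baire property gives $B=U\triangle M$ and $P\setminus B=V\triangle M'$ with $U,V$ non-empty open, topological transitivity produces $g$ with $gU\cap V\neq\emptyset$, and the inclusions $U\cap V\subseteq M\cup M'$ and $U\triangle gU= M\triangle gM$ (the latter from $gB=B$) force the non-empty open set $gU\cap V$ to be meager, contradicting the Baire category theorem. All the steps, including the preservation of meagerness under the homeomorphism $x\mapsto gx$, are justified correctly.
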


Combining this result with Theorem \ref{Thm:tt} and Proposition \ref{Prop:LE}, we immediately obtain the following.

\begin{thm}\label{Thm:0-1}
Let $G$ be a countable MIF group. For every $\Lo$-sentence $\sigma$, the set $\Mod(\sigma)$ is either meager or comeager.
\end{thm}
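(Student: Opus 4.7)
The proof will be a direct assembly of three already-established ingredients, so the plan is essentially bookkeeping. First I would observe that $L(G)$ is a Polish space by Proposition \ref{Prop:P}, and that the conjugacy action $G \curvearrowright L(G)$ defined by (\ref{Eq:CA}) is by homeomorphisms: for each fixed $g$, the map $\ell \mapsto g \circ \ell$ merely permutes the coordinates of $\ell$ viewed as a point of the product space $(\NN \cup \{0\})^G$, hence is continuous in the pointwise convergence topology, with continuous inverse furnished by the action of $g^{-1}$.

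Next I would invoke the MIF hypothesis on $G$ to apply Theorem \ref{Thm:tt}, which yields that the conjugacy action $G \curvearrowright L(G)$ is topologically transitive. Simultaneously, Proposition \ref{Prop:LE} supplies, for any $\Lo$-sentence $\sigma$, the fact that $\Mod(\sigma)$ is a $G$-invariant Borel subset of $L(G)$: the $G$-invariance follows from the observation that $x \mapsto g^{-1} x g$ is an $\L$-isomorphism between $G_\ell$ and $G_{g \circ \ell}$ (preserving both the group operation and each predicate $L_i$), and the Borelness is the content of Lemma \ref{Lem:LE}, which is a length-function analogue of the Lopez-Escobar theorem proved by transfinite induction on formula complexity.

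Finally, I would apply the quoted Kechris dichotomy [Theorem 8.46 of Kec]---any $G$-invariant Borel subset of a Polish space on which $G$ acts topologically transitively by homeomorphisms is either meager or comeager---directly to the set $\Mod(\sigma)$, which has all the required properties. This immediately yields the conclusion. There is no real obstacle to surmount at this stage; the genuine difficulty has been fully absorbed into Theorem \ref{Thm:tt} (where the MIF hypothesis is used in an essential way via Proposition \ref{Prop:HO} to produce the conjugating element $g$) and into Lemma \ref{Lem:LE}. Once those results are in place, Theorem \ref{Thm:0-1} is a one-line formal corollary, exactly as the phrasing just before its statement already anticipates.
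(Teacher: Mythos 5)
Your proposal is correct and is exactly the paper's argument: the authors also obtain Theorem \ref{Thm:0-1} immediately by applying the Kechris dichotomy (Theorem 8.46 of \cite{Kec}) to the $G$-invariant Borel set $\Mod(\sigma)$ from Proposition \ref{Prop:LE}, using the topological transitivity supplied by Theorem \ref{Thm:tt} and the fact that $L(G)$ is Polish. Nothing is missing.
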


Theorem \ref{main3} easily follows from Theorem \ref{Thm:0-1}.

\begin{proof}[Proof of Theorem \ref{main3}]
Let $S$ be the set of all first order sentences in the signature $\{\cdot, L_0, L_1,\ldots\}$.
Define
$$
T=\{\sigma\in S\mid \Mod(\sigma)\textrm{ is comeager}\}
$$
Since $T$ is countable, its set of models  $C=\bigcap_{\sigma\in T} \Mod(\sigma)$ is comeager. Note that, for every sentence $\sigma\in S$, we have $\Mod(\sigma)\cup \Mod(\lnot \sigma)=L(G)$. By the Baire category theorem, $L(G)$ cannot be covered by two meager sets. Combining this with Theorem \ref{Thm:0-1}, we obtain that either $\sigma\in T$ or $\lnot \sigma\in T$, i.e., $T$ is complete. Since all models of a complete theory are elementarily equivalent, $G_{\ell_1}\equiv G_{\ell_2}$ for any $\ell_1, \ell_2\in C$.
\end{proof}

The set $T=T(G)$ defined in the proof of Theorem \ref{main3} is an invariant of $G$ that can be thought of as the elementary theory of a generic length function. Note that the equality $T(G)=T(H)$ for two groups $G$ and $H$ implies that $G$ and $H$ are elementarily equivalent (in the language of groups) since the group multiplication is a part of the signature. We conclude with the following.

\begin{prob}
Let $G$ and $H$ be countable MIF groups. Does $G\equiv H$ imply $T(G)=T(H)$? What about the case when $G$ and $H$ are free groups?
\end{prob}

\vspace{5mm}
\noindent \textbf{Andrew Jarnevic: } SC 1326, Department of Mathematics, Vanderbilt University, Nashville 37240, U.S.A.

\noindent E-mail: \emph{andrew.n.jarnevic@vanderbilt.edu}

\smallskip

\noindent \textbf{Denis Osin: } SC 1326, Department of Mathematics, Vanderbilt University, Nashville 37240, U.S.A.

\noindent E-mail: \emph{denis.v.osin@vanderbilt.edu}

\smallskip

\noindent \textbf{Koichi Oyakawa:} SC 1326, Department of Mathematics, Vanderbilt University, Nashville 37240, U.S.A.

\noindent E-mail: \emph{koichi.oyakawa@vanderbilt.edu}

\end{document}